\documentclass[12pt]{amsart}
\usepackage[active]{srcltx}
\usepackage[centertags]{amsmath}
\usepackage{latexsym}
\usepackage{amssymb}


\addtolength{\oddsidemargin}{-1.2cm}
\addtolength{\evensidemargin}{-1.2cm}
\addtolength{\textwidth}{1.7cm}
\addtolength{\textheight}{1.6cm}
\DeclareMathOperator{\Tr}{Tr}

\newcommand{\F}{\mathbb F}
\newcommand{\barF}{\overline{\mathbb F}}
\newcommand{\barFq}{\overline{{\mathbb F}_q}}
\newcommand{\Proj}{\mathbb P}

\newcommand{\hide}[1]{}
\newtheorem{dummy}{Dummy}

\numberwithin{equation}{section}

\newtheorem{lemma}[dummy]{Lemma}

\newtheorem{theorem}[dummy]{Theorem}

\newtheorem{cor}[dummy]{Corollary}

\theoremstyle{definition}

\theoremstyle{remark}
\newtheorem{assumptions}[dummy]{Assumptions}
\newtheorem{property}[dummy]{Property}
\newtheorem{rem}[dummy]{Remark}

\begin{document}

\bibliographystyle{amsalpha}
\author{Sandro Mattarei}
\email{mattarei@science.unitn.it}
\address{Dipartimento di Matematica\\
  Universit\`a degli Studi di Trento\\
  via Sommarive 14\\
  I-38123 Povo (Trento)\\
  Italy}
\title{Inversion and subspaces of a finite field}
\date{\today}
\begin{abstract}
Consider two $\F_q$-subspaces $A$ and $B$ of a finite field, of the same size,
and let $A^{-1}$ denote the set of inverses of the nonzero elements of $A$.
The author proved that $A^{-1}$ can only be contained in $A$ if either $A$ is a subfield,
or $A$ is the set of trace zero elements in a quadratic extension of a field.
Csajb\'{o}k refined this to the following quantitative statement:
if $A^{-1}\not\subseteq B$, then the bound
$|A^{-1}\cap B|\le 2|B|/q-2$ holds.
He also gave examples showing that his bound is sharp for $|B|\le q^3$.
Our main result is a proof of the stronger bound
$|A^{-1}\cap B|\le |B|/q\cdot\bigl(1+O_d(q^{-1/2})\bigr)$,
for $|B|=q^d$ with $d>3$.
We also classify all examples with $|B|\le q^3$
which attain equality or near-equality in Csajb\'{o}k's bound.
\end{abstract}
\subjclass[2000]{Primary 11T06; secondary  51E20}
\keywords{finite field, subspace, inversion}
\maketitle

\section{Introduction}

In response to a question of Andrea Caranti, for use in~\cite{CDVS:AES}, the author determined in~\cite{Mat:inverse-closed}
the additive subgroups of a field which are closed with respect to inverting nonzero elements.
The more general question with a division ring instead of a field was independently answered
in~\cite{GGSZ}.
The proofs depend on Hua's identity~\cite{Hua:sfield_properties}, and on Jordan algebra
techniques to cover the noncommutative case.
However, a more direct argument based on polynomials was given in~\cite{Mat:inverse-closed}
in the special case of finite fields, which appears to have
attracted some attention for cryptographic applications.
In that special case the result reads as follows:
a non-trivial inverse-closed additive subgroup $A$ of a finite field $E$ is either a subfield of $E$ or the set of elements
of trace zero in some quadratic field extension contained in $E$.

In~\cite{Csajbok:inverse-closed}, Bence Csajb\'{o}k investigated a
question which may be thought of as a refinement of this result:
can one obtain the same conclusion from the weaker assumption that $A$ is an additive subgroup of a finite field which is
{\em almost} inverse-closed, in the sense that {\em most} of the inverses
of its nonzero elements belong to $A$?
Of course the two words in italics need to be given a precise meaning.
A very special case of this occurred in~\cite[Lemma~5.3]{KLS},
where the conclusion was proved under the assumption that $A$ is
inverse closed up to at most two nonzero elements.

It turns out that this question is better studied in the more
general form where two additive subgroups $A$ and $B$ of the same size of a finite
field are considered, and one asks for an upper bound on $|A^{-1}\cap B|$
in terms of $|B|$ in case $A^{-1}\not\subseteq B$.
Here $S^{-1}$, for $S$ a subset of a field, denotes the set of inverses of the nonzero elements of $S$.
Note that the intersection $A^{-1}\cap B$ attains maximal size $q^d-1$ exactly when $A^{-1}\subseteq B$,
and in that case $A$ and $B$ are both (one-dimensional) $\F_{q^d}$-subspaces.
Because the ambient finite field plays only a minor role, it appears convenient
to work in the algebraic closure of a finite field, and so we
rather state Csajb\'{o}k's results in the following equivalent form.

\begin{theorem}[Theorems~1.2 and~3.1 in~\cite{Csajbok:inverse-closed}]\label{thm:bound}
Let $A$ and $B$ be finite nonzero $\F_q$-subspaces of $\barFq$ of the same size,
with $A^{-1}\not\subseteq B$.
Then
$|A^{-1}\cap B|\le 2|B|/q-2$.

When $q=2$ and $|B|>2$ the conclusion can be strengthened to
$|A^{-1}\cap B|\le 3|B|/4-1$.
\end{theorem}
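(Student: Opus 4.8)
The plan is to encode the subspaces by their $q$-linearized vanishing polynomials and then pass to the auxiliary variable $t=x^{q-1}$, which turns the whole problem into a statement about two sparse polynomials of equal degree. Write $L_A(x)=\sum_{i=0}^d a_ix^{q^i}$ and $L_B(x)=\sum_{i=0}^d b_ix^{q^i}$ for the monic $\F_q$-linear polynomials vanishing exactly on $A$ and $B$, where $q^d=|A|=|B|$; here $a_d=b_d=1$, and $a_0,b_0\neq0$ since $L_A'=a_0$, $L_B'=b_0$ force reducedness. I would first note that $b\in A^{-1}\cap B$ means $b\neq0$, $L_B(b)=0$ and $L_A(1/b)=0$, and that every exponent occurring in $L_B(x)/x=\sum_i b_ix^{q^i-1}$ and in $b^{q^d}L_A(1/b)=\sum_i a_ib^{q^d-q^i}$ is a multiple of $q-1$. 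Substituting $t=x^{q-1}$ and writing $[i]=(q^i-1)/(q-1)$ therefore yields
\[
m(t)=\sum_{i=0}^d b_i\,t^{[i]},\qquad \ell(t)=\sum_{i=0}^d a_i\,t^{[d]-[i]},
\]
both of degree $N=[d]=(q^d-1)/(q-1)$. Since $x\mapsto x^{q-1}$ has fibres the $\F_q^\times$-cosets and $A^{-1}\cap B$ is a union of such cosets, I would record the exact identity $|A^{-1}\cap B|=(q-1)\,D$, where $D$ is the number of common roots of $m$ and $\ell$ (all nonzero, as $m(0)=b_0\neq0$). Everything thus reduces to proving $D\le 2[d-1]$.

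The engine is a characteristic-$p$ identity: because $[i]\equiv1\pmod p$ for $i\geq1$, one has $[i]-1=q[i-1]$ and $[d]-[i]=q^i[d-i]$, so away from one monomial every exponent is divisible by $q$. Extracting $q$-th roots in the perfect field $\barFq$, I would rewrite
\[
m(t)=b_0+t\,\hat m(t)^q,\qquad \ell(t)=a_0t^{[d]}+\hat g(t)^q,
\]
where $\hat m$ is monic of degree $[d-1]$, while $\deg\hat g\le[d-1]$ and $\hat g(0)=1$. The key step: at a common root $t_0$ one gets $t_0\hat m(t_0)^q=-b_0$ and $\hat g(t_0)^q=-a_0t_0^{[d]}$; multiplying these and using $[d]-1=q[d-1]$ gives $\bigl(\hat m(t_0)\hat g(t_0)\bigr)^q=\bigl(c\,t_0^{[d-1]}\bigr)^q$ with $c^q=a_0b_0$, and injectivity of Frobenius yields $\hat m(t_0)\hat g(t_0)=c\,t_0^{[d-1]}$. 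Hence every one of the $D$ (distinct) common roots is a root of
\[
\Phi(t)=\hat m(t)\hat g(t)-c\,t^{[d-1]},\qquad \deg\Phi\le 2[d-1].
\]

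It then remains to rule out $\Phi\equiv0$, for then $D\le\deg\Phi\le2[d-1]$ and $|A^{-1}\cap B|=(q-1)D\le2(q^{d-1}-1)=2|B|/q-2$. If $\Phi\equiv0$ then $\hat m\hat g=ct^{[d-1]}$ is a monomial; as $\hat g(0)=1$ and $\hat m$ is monic of degree $[d-1]$, this forces $\hat g\equiv1$, $\hat m=t^{[d-1]}$ and $c=1$, i.e.\ $a_1=\dots=a_{d-1}=b_1=\dots=b_{d-1}=0$ and $a_0b_0=1$. But then $L_A=x^{q^d}+a_0x$ and $L_B=x^{q^d}+b_0x$ with $b_0=a_0^{-1}$, so $A^{-1}$ is exactly the set of nonzero roots of $x^{q^d}+b_0x$, whence $A^{-1}\subseteq B$, contrary to hypothesis. (The cases $d\le1$ are immediate.)

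The step I expect to be hardest is the sharper bound $3|B|/4-1$ for $q=2$: there $q-1=1$, so the argument above only gives $D\le2[d-1]=|B|-2$, and shaving it down to $3\cdot2^{d-2}-1$ requires more than the single relation $\Phi(t_0)=0$ at each common root. The plan is to exploit that $m$ is squarefree (from $t\,m'(t)=m(t)-b_0$), so that $h=\gcd(m,\ell)$ is squarefree with $\deg h=D$ and $h\mid\Phi$, and then to extract a second, characteristic-two-specific relation forcing $h$ to divide a polynomial of degree $3\cdot2^{d-2}-1$; the finitely many degenerate instances with $|B|\le2$ are checked directly.
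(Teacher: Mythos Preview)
Your argument for the main bound $|A^{-1}\cap B|\le 2|B|/q-2$ is correct, but it reaches the goal by a different mechanism from the paper's. The paper never passes to $t=x^{q-1}$ or extracts $q$-th roots; it simply writes down the single combination
\[
C(x)=x^{q^d}A(1/x)\cdot x^{q^{d-1}-1}-\frac{B(x)}{x}\cdot x^{q^{d-1}}\bigl(A(1/x)-x^{-q^d}\bigr)
=x^{q^{d-1}-1}-\Bigl(\sum_{i=0}^{d-1}a_ix^{q^{d-1}-q^i}\Bigr)\Bigl(\sum_{j=0}^{d-1}b_jx^{q^j-1}\Bigr),
\]
one step of a long-division variant in which the two top-degree terms cancel by design, and reads off $\deg C\le 2q^{d-1}-2$. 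Your construction instead isolates the ``Frobenius part'' of $m$ and $\ell$ and multiplies the resulting relations at a common root to obtain $\Phi=\hat m\hat g-ct^{[d-1]}$. Both routes produce an auxiliary polynomial of degree at most $2q^{d-1}-2$ (in $x$) vanishing on $A^{-1}\cap B$, and both reduce the degenerate case to $A(x)=x^{q^d}+a_0x$, $B(x)=x^{q^d}+a_0^{-1}x$. The paper's version is shorter, keeps the coefficients $a_i,b_j$ intact (no $q$-th roots), and---more importantly for what follows---its explicit $C(x)$ is the workhorse of every later section (its factorisations, the homogeneous form $E$, the Lang--Weil argument). Your $\Phi$ would serve equally well for the bare inequality, but would need to be recast in that explicit shape for the downstream applications.

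Your treatment of the $q=2$ refinement, however, is a genuine gap: you offer only a hope of finding a ``characteristic-two-specific relation'', whereas in fact the step is routine and requires no new idea. The paper just iterates the same elimination once more, setting
\[
D(x)=C(x)\cdot x^{\,q^d-2q^{d-1}+1}+b_{d-1}\,x^{q^d}A(1/x),
\]
so that the new leading terms cancel and $\deg D\le q^d-q^{d-1}+q^{d-2}-1$; this inequality holds for every $q$ but improves on the first bound only when $q=2$, where it reads $3\cdot 2^{d-2}-1$. The non-vanishing of $D$ for $d>1$ is checked exactly as for $C$. In your framework (where $t=x$ when $q=2$) the analogue is simply to combine $\Phi$ linearly with $\ell$ once more to kill the top term---no special characteristic-two input is needed.
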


In Section~\ref{sec:meet} we present a proof of Theorem~\ref{thm:bound}
which is shorter than Csajb\'{o}k's original proof, but also more explicit.
This is because, say in case of the former bound of Theorem~\ref{thm:bound}, our proof produces
a polynomial $C(x)$, of degree $2|B|/q-2$, explicitly computable from the polynomials defining $A$ and $B$, whose set of roots contains
$A^{-1}\cap B$.
This can then be effectively used for further study of
$A^{-1}\cap B$, as we illustrate next.

A natural question which arises at this point is whether the
bounds given in Theorem~\ref{thm:bound} are best possible, especially the
general bound which holds for arbitrary $q$.
In the early draft of~\cite{Csajbok:inverse-closed} which was
available to the author during most of the writing of this paper, the few examples provided fell short of
showing sharpness of the bound beyond the rather trivial cases where $|B|\le q^2$,
which we briefly discuss at the end of Section~\ref{sec:meet}.

The present work begun as an attempt to provide
examples with $|B|=q^3$ where equality is attained in Csajb\'{o}k's bound.
We present such examples in Section~3.
They appear in Theorem~\ref{thm:q^3_full}, within a more general situation where
$(A^{-1}\cap B)\cup\{0\}$ contains a one-dimensional $\F_{q^2}$-subspace of $\barFq$.
In fact, under this assumption, which we will later show not to be restrictive, our proof of Theorem~\ref{thm:bound}
is especially effective: it allows us to give a polynomial description of
all pairs $(A,B)$ of three-dimensional spaces which attain equality in Csajb\'{o}k's bound,
that is, which satisfy $|A^{-1}\cap B|=2q^2-2$.
This is possible only for $q$ odd and, in geometric language, it occurs exactly when the image of $A^{-1}\cap B$ in the projective plane $\Proj B\cong\Proj^{2}(\F_q)$
associated with the linear $\F_q$-space $B$ is the union of a (nondegenerate) conic and an external line.
The configuration of the union of a conic and a secant line also arises,
and the corresponding subspaces then satisfy $|A^{-1}\cap B|=2q^2-2q$.
In even characteristic those two configurations collapse to that of
a conic and a tangent line, whence $|A^{-1}\cap B|=2q^2-q-1$.
The final, published version of~\cite{Csajbok:inverse-closed},
includes a study of the case where $A$ and $B$ have dimension three,
providing a presentation of the examples which we have briefly described.
However, Csajb\'{o}k's geometric approach limits his results to subspaces of $\F_{q^4}$ for $q$ odd,
and there is little overlap with our results, see Remark~\ref{rem:comparison}.

The claim we implicitly made above, that we have actually found
all pairs $(A,B)$ of three-dimensional $\F_q$-subspaces of $\barFq$ which attain equality in Csajb\'{o}k's bound,
relies on removing our additional assumption that
$(A^{-1}\cap B)\cup\{0\}$ contains a one-dimensional $\F_{q^2}$-subspace of $\barFq$.
We do that in Section~\ref{sec:classification}, as an exceptional case of a more general goal which we now introduce.

Csajb\'{o}k speculated in~\cite{Csajbok:inverse-closed} that for $\F_q$-subspaces $A$ and $B$ of the same fixed
dimension $d>3$ the stronger bound
$|A^{-1}\cap B|\le |B|/q\cdot\bigl(1+O(q^{-1/2})\bigr)$ might hold.
Our main result shows that this is indeed the case.

\begin{theorem}\label{thm:d_large}
Let $A$ and $B$ be finite nonzero $\F_q$-subspaces of $\barFq$,
with $|A|=|B|=q^d>q^3$ and $A^{-1}\not\subseteq B$.
Then
\[
|A^{-1}\cap B|\le
q^{d-1}+(d-1)(d-2)q^{d-(3/2)}+C_d\cdot q^{d-2},
\]
where $C_d$ only depends on $d$.
\end{theorem}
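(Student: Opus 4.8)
The plan is to build on the explicit-polynomial version of Theorem~\ref{thm:bound} proved in Section~\ref{sec:meet} and to convert the enumeration of $A^{-1}\cap B$ into a point count on an algebraic curve over $\F_q$, to which the Hasse--Weil bound applies. First I would encode $A$ and $B$ by their $q$-linearized (additive) vanishing polynomials $L_A(x)=\sum_{i=0}^d a_i x^{q^i}$ and $L_B(x)=\sum_{j=0}^d b_j x^{q^j}$, and record that $y\in A^{-1}\cap B$ is equivalent to the simultaneous conditions $L_B(y)=0$ and $y^{q^d}L_A(1/y)=\sum_i a_i y^{q^d-q^i}=0$. The proof of Theorem~\ref{thm:bound} already produces, by eliminating top-degree Frobenius terms between these two polynomials, an explicit $C(x)$ of degree $2q^{d-1}-2$ whose zero set contains $A^{-1}\cap B$; this is the object I would analyse more finely. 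A first useful reduction is that $A^{-1}\cap B$ is invariant under multiplication by $\F_q^\ast$ (if $y^{-1}\in A$ and $y\in B$ then $(\lambda y)^{-1}=\lambda^{-1}y^{-1}\in A$ and $\lambda y\in B$), so the problem descends to counting the image of $A^{-1}\cap B$ in $\Proj B\cong\Proj^{d-1}(\F_q)$.

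The heart of the argument is to exhibit an algebraic curve $\mathcal X$ over $\F_q$ that governs this count. Using the $\F_q$-linear structure of $L_A$ and $L_B$ together with the inversion relation $uv=1$, I would split off a $(d-2)$-dimensional linear part --- subsuming the $\F_q^\ast$-scaling above --- so that the residual, essentially two-variable, data of a solution is constrained to lie on $\mathcal X(\F_q)$, each of whose points accounts for at most $q^{d-2}$ elements of $A^{-1}\cap B$. Concretely this should arise as a resultant of $L_B$ and $y^{q^d}L_A(1/y)$ with respect to the Frobenius twists, whose vanishing cuts out a plane curve of degree at most $d$. The target inequality $|A^{-1}\cap B|\le q^{d-2}\cdot\#\mathcal X(\F_q)$ then has exactly the right shape: the main term $q^{d-1}=q\cdot q^{d-2}$ and the error exponent $(d-1)(d-2)$ both point to a curve of geometric genus $g\le\binom{d-1}{2}=(d-1)(d-2)/2$, which is the Pl\"ucker bound for a plane curve of degree $d$.

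With $\mathcal X$ in hand I would bound its genus as above and then apply the Hasse--Weil estimate $\#\mathcal X(\F_q)\le q+1+2g\sqrt q\le q+1+(d-1)(d-2)\sqrt q$. Multiplying by the fibre size $q^{d-2}$ and absorbing the resulting $q^{d-2}$ and all remaining lower-order contributions into $C_d\cdot q^{d-2}$ yields precisely the stated bound.

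The main obstacle is exactly the hypothesis that makes the Hasse--Weil bound applicable: the curve $\mathcal X$ must be absolutely irreducible (or at worst have geometrically irreducible $\F_q$-components of high degree). Reducibility is not a technicality but the true source of the phenomenon, as the case $d=3$ shows: there $\mathcal X$ degenerates to a conic together with a line, and the line component contributes an extra $\sim q$ projective points, producing exactly the configurations of Theorem~\ref{thm:q^3_full} that attain Csajb\'{o}k's bound $2q^2-2$ and violate the stronger estimate. Thus the crux of the proof for $d>3$ is to rule out low-degree, and in particular linear, components of $\mathcal X$: I would argue that any such splitting forces $A$ to be a subfield or the trace-zero subspace of a quadratic extension --- whence $A^{-1}\subseteq B$, contrary to hypothesis --- or else confines the degenerate locus to dimension $\le d-3$, where its contribution is dominated by $C_d\cdot q^{d-2}$. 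This is where the classification recalled in the abstract, together with the analysis carried out in Section~\ref{sec:classification}, would be brought to bear.
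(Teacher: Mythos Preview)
Your overall strategy---pass from $C(x)$ to a form on $\Proj B$ and bound $\F_q$-points---is the paper's strategy. But two structural points go wrong.

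First, the object that $C(x)$ naturally defines is not a plane curve; it is a hypersurface of degree $d$ in $\Proj^{d-1}$. Writing $x_i=x^{q^i}$ turns $x^{q+2}\cdots C(x)$ into a form $E(x_0,\ldots,x_{d-1})$ of degree $d$ in $d$ variables, and after a linear change to $\F_q$-coordinates on $B$ the elements of $A^{-1}\cap B$ land on the zero locus of $\tilde E$ in $\Proj B\cong\Proj^{d-1}(\F_q)$. There is no ``$(d-2)$-dimensional linear part'' to split off: a generic hypersurface of degree $d$ in $\Proj^{d-1}$ is not fibred over a curve by linear $(d-3)$-spaces, and your resultant description (``with respect to the Frobenius twists'') does not specify any actual construction. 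The coincidence that the Lang--Weil error term $(d-1)(d-2)q^{d-5/2}$ has the same $(d-1)(d-2)$ as the Pl\"ucker genus bound does not mean a curve is lurking; it is simply the shape of Lang--Weil for a degree-$d$ variety. The paper therefore applies the Lang--Weil bound directly to the $(d-2)$-dimensional hypersurface and multiplies by $q-1$; Hasse--Weil plays no role for $d>3$. (It also treats the case where $\tilde E$ is irreducible but not defined over $\F_q$ by intersecting with its Frobenius conjugate and invoking Schwartz--Zippel.)

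Second, your plan for the reducible case does not match the phenomenon. Reducibility of $E$ does \emph{not} force $A$ to be a subfield or a trace-zero hyperplane, nor does it force $A^{-1}\subseteq B$. The paper proves an independent combinatorial result (Theorem~\ref{thm:form}, occupying all of Section~\ref{sec:form}) classifying the possible factorisations of $E$: there are at most two non-monomial irreducible factors, and if there are two then one of them is a binomial $\alpha x_i+\beta x_j$. One then checks, case by case, that the binomial factor contributes at most $q^{j-i}-1$ roots; when $j-i<d-1$ this is already small, and when $j-i=d-1$ the pair $(A,B)$ is forced into the special shape of Theorem~\ref{thm:q^d}, which gives $|A^{-1}\cap B|\le q^{d-1}+2q^2-3$ directly. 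This factorisation analysis is the real technical content of the proof, and nothing in Section~\ref{sec:classification} or in the inverse-closed classification substitutes for it.
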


It follows, in particular, that Csajb\'{o}k's general bound of Theorem~\ref{thm:bound} can only be sharp for subspaces
of dimension up to three (at least for large $q$).

\begin{cor}\label{cor:never}
Equality in the bound $|A^{-1}\cap B|\le 2|B|/q-2$ of Theorem~\ref{thm:bound} is
never attained for $d>3$, where $|B|=q^d$, provided $q$ is sufficiently large with respect to $d$.
\end{cor}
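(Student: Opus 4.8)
The plan is to derive Corollary~\ref{cor:never} directly from Theorem~\ref{thm:d_large} by a routine comparison of the two upper bounds, with no further structural analysis of the subspaces. Suppose, for a contradiction, that equality $|A^{-1}\cap B|=2|B|/q-2=2q^{d-1}-2$ holds for some $\F_q$-subspaces $A$ and $B$ of $\barFq$ with $|A|=|B|=q^d$ and $d>3$. Attaining this value forces $A^{-1}\not\subseteq B$, since otherwise $|A^{-1}\cap B|=q^d-1$; hence the hypotheses of Theorem~\ref{thm:d_large} are met, and that theorem gives
\[
2q^{d-1}-2\le q^{d-1}+(d-1)(d-2)q^{d-(3/2)}+C_d\cdot q^{d-2}.
\]

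Rearranging and dividing through by $q^{d-2}$, this is equivalent to
\[
q-2q^{-(d-2)}\le (d-1)(d-2)\,q^{1/2}+C_d.
\]
The left-hand side grows linearly in $q$, while the right-hand side grows only like $q^{1/2}$, so the inequality fails once $q$ is large in terms of $d$. Quantitatively, since $d>3$ gives $2q^{-(d-2)}\le 2$, it suffices that $q-2$ exceed $(d-1)(d-2)\,q^{1/2}+C_d$, which holds as soon as $q$ is above a threshold of order $(d-1)^2(d-2)^2$, after absorbing the constant $C_d$. For any such $q$ the displayed inequality is violated, contradicting the assumed equality; therefore equality cannot occur.

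The point worth checking is only that Theorem~\ref{thm:d_large} genuinely applies under the standing hypothesis, that is, that attaining equality in Csajb\'{o}k's bound entails both $A^{-1}\not\subseteq B$ and $|B|=q^d>q^3$; the first was just observed and the second is built into the setup. Beyond this the argument is a one-line asymptotic comparison, so I expect no real obstacle here: all the difficulty has already been discharged in the proof of Theorem~\ref{thm:d_large}. The only mild care needed is to make explicit that the resulting threshold on $q$ depends on $d$ through the coefficient $(d-1)(d-2)$ and the constant $C_d$, in accordance with the phrase ``$q$ sufficiently large with respect to $d$'' in the statement.
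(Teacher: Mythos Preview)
Your argument is correct and is exactly the routine asymptotic comparison the paper has in mind: the corollary is stated as an immediate consequence of Theorem~\ref{thm:d_large} with no separate proof, and your write-up simply makes that deduction explicit. The one superfluous step is verifying $A^{-1}\not\subseteq B$, since the bound of Theorem~\ref{thm:bound} is already stated under that hypothesis; otherwise nothing needs changing.
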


The bound of Theorem~\ref{thm:d_large}, which we prove in Section~\ref{sec:higher-dim},
results from an application of the Lang-Weil bound to
a multivariate polynomial closely related to the polynomial $C(x)$
used in our proof of Theorem~\ref{thm:bound}, when that is irreducible.
However, considerable work is necessary, which we postpone to Section~\ref{sec:form}, to show that such polynomial
can only be reducible in very special situations.
Those exceptional geometric situations generalise the configurations of pairs of two- or three-dimensional subspaces attaining equality in Csajb\'{o}k's bound
which we have briefly discussed earlier.

\section{Intersecting a subspace with the inverse of another}\label{sec:meet}

A finite subset of $\barFq$ is conveniently characterised by the unique monic polynomial
in $\barFq[x]$ whose roots are the elements of the subset, each with multiplicity one.
Thus, to the $\F_q$-subspaces $A$ and $B$ of $\barFq$, with size $q^d$,
throughout the paper we associate the monic polynomials which have the elements of $A$ and $B$,
respectively, as their roots, each with multiplicity one.
(Using the same letters for the subspaces $A$ and $B$ and their polynomials should create no confusion.)
It is well known that $A(x)$ and $B(x)$ are $q$-polynomials,
see~\cite[Theorem~3.52]{LN}, which means that they have the form
$A(x)=\sum_{i=0}^da_ix^{q^i}$
and $B(x)=\sum_{i=0}^db_ix^{q^i}$,
with $a_d=b_d=1$.
Also, the simplicity of their roots amounts to $a_0b_0\neq 0$.
Hence the roots of $x^{q^d}\,A(1/x)=\sum_{i=0}^da_ix^{q^d-q^i}$ and $B(x)/x$ are the
elements of $A^{-1}$ and $B\setminus\{0\}$, respectively.
With this notation at hand we now present a very short proof of Csajb\'{o}k's bounds.

\begin{proof}[Proof of Theorem~\ref{thm:bound}]
The idea of the proof is to give an upper bound on the degree of
the greatest common divisor of $x^{q^d}\,A(1/x)$ and $B(x)/x$.
The fact that the non-leading terms of $B(x)$ have relatively small
degree suggests applying a variant of polynomial long division,
where one keeps subtracting a scalar multiple of $B(x)/x$
from the current reminder multiplied by the appropriate power of $x$.
The final result of this process is condensed in the following argument.

The common roots of
$x^{q^d}\,A(1/x)$ and $B(x)/x$
are also roots of the polynomial
\begin{equation*}
\begin{aligned}
C(x)
&=
x^{q^d}A(1/x)\cdot x^{q^{d-1}-1}-B(x)/x
\cdot
x^{q^{d-1}}\bigl(A(1/x)-1/x^{q^d}\bigr)
\\&=
x^{q^{d-1}-1}
-\bigl(B(x)-x^{q^d}\bigr)/x
\cdot
x^{q^{d-1}}\bigl(A(1/x)-1/x^{q^d}\bigr),
\end{aligned}
\end{equation*}
which has degree at most $2q^{d-1}-2$.
This shows that
$|A^{-1}\cap B|\le 2|B|/q-2$,
except when the polynomial $C(x)$ vanishes.
The latter condition occurs exactly when
$A(x)=x^{q^d}+a_0x$ and $B(x)=x^{q^d}+a_0^{-1}x$,
which means that $A$ and $B$ are $\F_{q^d}$-subspaces, and $B=A^{-1}\cup\{0\}$.

Assuming $d>1$ we now prove a different bound, which holds for arbitrary $q$ but improves on the previous bound only when $q=2$.
The polynomial
\begin{align*}
D(x)
&=
C(x)\cdot x^{q^d-2q^{d-1}+1}+b_{d-1}x^{q^d}\,A(1/x)
\\&=
x^{q^d-q^{d-1}}+b_{d-1}
-\bigl(B(x)-x^{q^d}-b_{d-1}x^{q^{d-1}}\bigr)
\cdot
x^{q^d-q^{d-1}}\bigl(A(1/x)-1/x^{q^d}\bigr),
\end{align*}
has degree at most
$q^d-q^{d-1}+q^{d-2}-1$,
and is nonzero if
$A$ and $B$ are not both $\F_{q^d}$-subspaces.
Because the common roots of
$x^{q^d}\,A(1/x)$ and $B(x)/x$
are also roots of $D(x)$ we obtain
$|A^{-1}\cap B|\le q^d-q^{d-1}+q^{d-2}-1$.
This is better than the previous bound only when $q=2$, and then reads
$|A^{-1}\cap B|\le 3|B|/4-1$.
\end{proof}

The above proof offers more advantages
over Csajb\'{o}k's original proof than just brevity.
It provides us with a polynomial
\begin{equation}\label{eq:C(x)}
C(x)=x^{q^{d-1}-1}-
\bigl(\sum_{i=0}^{d-1}a_ix^{q^{d-1}-q^i}\bigr)\cdot
\bigl(\sum_{j=0}^{d-1}b_jx^{q^j-1}\bigr),
\end{equation}
of degree at most $2q^{d-1}-2$,
such that all elements of $A^{-1}\cap B$ are roots of $C(x)$.
We will put that to good use in the next sections.


We mention in passing that our proof of Theorem~\ref{thm:bound} can be easily modified to deal with affine $d$-dimensional $\F_q$-subspaces of $\barFq$.
In fact, such affine subspaces have the form $A+\alpha$ and $B+\beta$, with $A,B$ as in Theorem~\ref{thm:bound} and $\alpha,\beta\in\barFq$,
which are the sets of roots of the polynomials $A(x-\alpha)=A(x)-A(\alpha)$ and $B(x)-B(\beta)$.
Therefore, the common elements of $(A+\alpha)^{-1}$ and $B+\beta$ are roots of the polynomial
\[
xC(x)
+A(\alpha)x^{q^{d-1}}\bigl(B(x)-x^{q^d}\bigr)
+B(\beta)x^{q^{d-1}}\bigl(A(1/x)-1/x^{q^d}\bigr),
\]
and hence
$|(A+\alpha)^{-1}\cap (B+\beta)|\le 2|B|/q$.
However, we will not consider affine subspaces of $\barFq$ any further in this paper.

We conclude this section by mentioning an alternate, more direct proof of Csajb\'{o}k's bound
in the two-dimensional case.
Let $A$ and $B$ be arbitrary two-dimensional subspaces of $\barFq$,
and consider a maximal set of $\F_q$-linearly independent elements in $A^{-1}\cap B$.
If that set is empty or a singleton, then $|A^{-1}\cap B|$ equals $0$ or $q-1$.
Otherwise, that set consists of $1/\xi$ and $1/\eta$, for some $\xi,\eta\in A$.
If $|A^{-1}\cap B|>2q-2$, then the inverse of some nontrivial linear
combination of $\xi$ and $\eta$ also belongs to $B$.
After possibly scaling $\xi$ or $\eta$ we may assume that $1/(\xi+\eta)$ belongs to $B$,
and hence $1/(\xi+\eta)=a/\xi+b/\eta$ for some nonzero $a, b\in\F_q$.
Therefore, $\xi/\eta$ satisfies a quadratic equation with
coefficients in $\F_q$, and hence $\xi/\eta\in\F_{q^2}$.
(A generalisation of this argument is presented in~\cite{Mat:caps}.)
Consequently, $A$ is an $\F_{q^2}$-subspace,
and $B=A^{-1}$ follows.
We have shown that $|A^{-1}\cap B|/(q-1)\in\{0,1,2,q+1\}$.

In particular, this argument proves an easy fact which is mentioned right after~\cite[Proposition~4.5]{Csajbok:inverse-closed}:
any pair $(A,B)$ of two-dimensional $\F_q$-subspaces of $\barFq$
which attain equality in Csajb\'{o}k's bound $|A^{-1}\cap B|\le 2q-2$,
is obtained by taking as $A$ an arbitrary two-dimensional $\F_q$-subspace
which is not an $\F_{q^2}$-subspace, and as $B$ the $\F_q$-span of the inverses of any two $\F_q$-linearly
independent elements of $A$.

\section{A special configuration of subspaces}\label{sec:three_special}

In this section we consider two $\F_q$-subspaces $A$ and $B$
of $\barFq$, of dimension $d$, in a rather special configuration.
That assumption insures that the polynomial $C(x)$ of Equation~\eqref{eq:C(x)}
has a factor of the form $x^{q^{d-1}-1}+c$,
and this allows precise control over the set of roots of $C(x)$.
This may seem like a rather artificial situation
but, as we will see later in Theorem~\ref{thm:q^3_count}, when $d=3$
it includes all cases where equality is attained in Csajb\'{o}k's bound.
Our proof is purely algebraic, but after the proof we will explain what
goes on in geometric terms.

Because
$(\gamma^{-1}A)^{-1}\cap\gamma B=\gamma(A^{-1}\cap B)$
for any $\gamma\in\barFq^\ast$,
we declare the ordered pair of $\F_q$-subspaces
$(\gamma^{-1}A,\gamma B)$
to be {\em equivalent} to the pair $(A,B)$.
Note that in principle one may consider a weaker equivalence relation
which includes the application of field automorphisms, and possibly interchanging $A$ and $B$, but we chose not to do so.

\begin{theorem}\label{thm:q^3_full}
Let $A$ and $B$ be $\F_q$-subspaces of $\barFq$ of size $q^3$,
and suppose that $(A^{-1}\cap B)\cup\{0\}$ contains a one-dimensional $\F_{q^2}$-subspace of $\barFq$.
Then the pair $(A,B)$ is equivalent to a pair of subspaces consisting of the roots
of the polynomials
$A(x)=x^{q^3}+ax^{q^2}-x^q-ax$
and
$B(x)=x^{q^3}+bx^{q^2}-x^q-bx$, for some $a,b\in\barFq^\ast$, and we have
\begin{enumerate}
\item
$|A^{-1}\cap B|=2q^2-2$
if $q$ is odd, $a^{q+1}=b^{q+1}=-1$, $a^{(q+1)/2}\neq b^{(q+1)/2}$,
and $ab\neq 1$;
\item
$|A^{-1}\cap B|=2q^2-2q$
if $q$ is odd, $a^{q+1}=-1$, and
$a^{(q+1)/2}=b^{(q+1)/2}$;
\item
$|A^{-1}\cap B|=2q^2-q-1$
if $q$ is even,
$a^{q+1}=b^{q+1}=1$,
and $ab\neq 1$;
\item
$|A^{-1}\cap B|\le q^2+2q-3$ otherwise.
\end{enumerate}
\end{theorem}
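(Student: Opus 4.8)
The plan is to first use the special hypothesis, together with the scaling equivalence, to bring $(A,B)$ into the asserted normal form, and then to read off $|A^{-1}\cap B|$ from an explicit factorisation of the polynomial $C(x)$ of~\eqref{eq:C(x)}.

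First I would normalise. If $V$ is a one-dimensional $\F_{q^2}$-subspace with $V\setminus\{0\}\subseteq A^{-1}\cap B$, then $V\subseteq B$, while inversion sends $\gamma\F_{q^2}^\ast$ to $\gamma^{-1}\F_{q^2}^\ast$, so $(V\setminus\{0\})^{-1}$ spans another one-dimensional $\F_{q^2}$-subspace contained in $A$. Writing $V=\gamma\F_{q^2}$ and replacing $(A,B)$ by the equivalent pair $(\gamma A,\gamma^{-1}B)$, I may assume $\F_{q^2}\subseteq A$ and $\F_{q^2}\subseteq B$, with $\F_{q^2}^\ast\subseteq A^{-1}\cap B$. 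Since $\dim_{\F_q}A=3$ I can write $A=\F_{q^2}\oplus\F_q\alpha$, and the $q$-polynomial $A(x)$ is then the composition $M(L(x))$, where $L(x)=x^{q^2}-x$ vanishes on $\F_{q^2}$ and $M(y)=y^q-\mu^{q-1}y$ vanishes on $\F_q\mu$, with $\mu=\alpha^{q^2}-\alpha\neq0$. This gives $A(x)=x^{q^3}+ax^{q^2}-x^q-ax$ with $a=-\mu^{q-1}$, and likewise $B(x)=x^{q^3}+bx^{q^2}-x^q-bx$ with $b=-\nu^{q-1}$, $\nu=\beta^{q^2}-\beta$, establishing the normal form.

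Substituting this normal form into~\eqref{eq:C(x)} and expanding, I expect the factorisation
\[
C(x)=(x^{q^2-1}-1)\bigl(ab\,x^{q^2-1}+b\,x^{q^2-q}-a\,x^{q-1}-ab\bigr),
\]
exhibiting the factor $x^{q^2-1}-1$ promised in Section~\ref{sec:three_special} (so $c=-1$). Its $q^2-1$ roots are exactly $\F_{q^2}^\ast$, all of which lie in $A^{-1}\cap B$; this is the ``line'' contributing $q^2-1$ intersection points in every case, and the remaining points lie among the $q^2-1$ roots of the second factor. To count these exactly I would argue directly: writing membership in $A$ and $B$ as $x^{q^2}-x\in\F_q\mu$ and $x^{q^2}-x\in\F_q\nu$, a short manipulation shows that for $\xi\notin\F_{q^2}$ one has $\xi\in A^{-1}\cap B$ if and only if $\xi\in B$ and $\xi^{(q-1)(q^2+1)}=b/a$. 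As this last condition is invariant under $\F_q^\ast$-scaling and $B\setminus\F_{q^2}=\F_q^\ast\cdot(\beta+\F_{q^2})$, the count reduces to
\[
M=\#\{\,w\in\beta+\F_{q^2}\;:\;w^{q^2+1}\in\F_q^\ast\cdot(\nu/\mu)\,\},\qquad |A^{-1}\cap B|=(q^2-1)+(q-1)M.
\]

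Finally I would evaluate $M$. The identity $a^{q+1}=\mu^{q^2-1}$ shows that $a^{q+1}=-1$ (respectively $a^{q+1}=1$ in even characteristic) precisely when $\alpha\in\F_{q^4}$, and similarly for $b$ and $\beta$, so cases~(1)--(3) are exactly those with $A,B\subseteq\F_{q^4}$. In that range $w^{q^2+1}$ is the relative norm $\F_{q^4}\to\F_{q^2}$, hence $w\mapsto w^{q^2+1}$ restricted to the affine line $\beta+\F_{q^2}$ is a quadratic map into $\F_{q^2}$, and the condition $w^{q^2+1}\mu/\nu\in\F_q$, written out in an $\F_q$-basis of $\F_{q^2}$, exhibits $M$ as the number of affine $\F_q$-points of a conic. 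Its projective type is governed by a discriminant which is a non-square, zero, or a square according to the square classes recorded by $a^{(q+1)/2}$ and $b^{(q+1)/2}$ and the parity of $q$, yielding $M=q+1$, $q$, or $q-1$ and thus cases~(1), (3), (2); when $A$ or $B$ is not contained in $\F_{q^4}$, or in degenerate situations such as $ab=1$, one instead bounds the solutions directly by $M\le2$, giving case~(4). I expect this discriminant bookkeeping to be the main obstacle: one must pin down the conic precisely, compute its type in terms of $a$ and $b$ while keeping the even- and odd-characteristic analyses in step, match the outcome to the exact conditions in~(1)--(3), and separately secure the uniform bound $M\le 2$ (hence $|A^{-1}\cap B|\le q^2+2q-3$) in all remaining configurations.
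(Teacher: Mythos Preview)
Your normalisation and the arithmetic characterisation
\[
\xi\in A^{-1}\cap B\setminus\F_{q^2}^\ast
\iff
\xi\in B\setminus\F_{q^2}\ \text{and}\ \xi^{(q-1)(q^2+1)}=b/a
\]
are both correct, and the reduction to counting $M=\#\{w\in\beta+\F_{q^2}:w^{q^2+1}\mu/\nu\in\F_q\}$ is a genuinely different route from the paper's. The paper never isolates this condition; after the common factorisation of $C(x)$ it instead computes the remainder of $x^{q^3}A(1/x)$ modulo the second factor $aby^{q+1}+by^q-ay-ab$ (with $y=x^{q-1}$), via the M\"obius substitution $y^q\equiv(b^{-1}y+1)/(y+a^{-1})$ iterated once. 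That remainder is a quadratic in $y$ whose numerator vanishes precisely when $a^{q+1}=b^{q+1}=-1$, which cleanly separates cases~(1)--(3) from~(4) and simultaneously delivers the bound $M\le 2$ in case~(4). The distinction among (1), (2), (3) then comes from a one-line Euclidean step computing $\gcd(y^{q+1}-1,\,y^{q+1}+a^{-1}y^q-b^{-1}y-1)$, rather than from a conic discriminant. What your approach buys is a transparent geometric picture and the pleasant identification of cases~(1)--(3) with $A,B\subseteq\F_{q^4}$; what the paper's approach buys is uniformity---no parity split, no affine-conic classification, and case~(4) for free.

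The one place where your sketch is genuinely thin is the assertion ``bounds the solutions directly by $M\le 2$'' in case~(4). A naive argument (your conic is then not defined over $\F_q$, intersect with its Frobenius conjugate, B\'ezout) only gives $M\le 4$, hence $|A^{-1}\cap B|\le q^2+4q-5$, which misses the stated bound. One can recover $M\le 2$ along your lines: if $w_1,w_2,w_3\in\beta+\F_{q^2}$ are three solutions with $w_i^{q^2+1}\mu/\nu=r_i\in\F_q$, the identity
\[
w_i^{q^2+1}-w_j^{q^2+1}=(w_i-w_j)\,(w_i^{q^2}+w_j)
\]
forces $\dfrac{(w_1-w_2)(w_1^{q^2}+w_2)}{(w_1-w_3)(w_1^{q^2}+w_3)}\in\F_q$, and a short computation using $(w_1^{q^2}+w_j)^{q^2}-(w_1^{q^2}+w_j)=\nu^{q^2}+\nu$ shows this is impossible unless $\nu^{q^2}+\nu=0$, i.e.\ $b^{q+1}=-1$; combined with the observation that $b^{q+1}=-1\neq a^{q+1}$ forces $M=0$, this closes case~(4). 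But this argument (or something equivalent) must actually be supplied; as written, the proposal leaves the sharp bound in~(4) unproven. The degenerate subcase $ab=1$ inside $a^{q+1}=b^{q+1}=-1$ also needs separate handling in your framework (the conic degenerates), whereas the paper disposes of it in one line via the derivative criterion on the second factor.
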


\begin{proof}
With notation as in the proof of Theorem~\ref{thm:bound}, let
$A(x)=x^{q^3}+a_2x^{q^2}+a_1x^q+a_0x$
and
$B(x)=x^{q^3}+b_2x^{q^2}+b_1x^q+b_0x$
be the monic polynomials with distinct roots, hence with $a_0b_0\neq 0$,
which have $A$ and $B$ as their sets of roots.
That proof shows that all elements of
$A^{-1}\cap B$ are roots of the polynomial
\begin{align*}
C(x)
&=
-a_0b_2x^{2q^2-2}-a_1b_2x^{2q^2-q-1}-a_0b_1x^{q^2+q-2}
\\&\qquad
+(1-a_2b_2-a_1b_1-a_0b_0)x^{q^2-1}
\\&\qquad
-a_1b_0x^{q^2-q}-a_2b_1x^{q-1}-a_2b_0.
\end{align*}

By hypothesis $(A^{-1}\cap B)\cup\{0\}$ contains a one-dimensional
$\F_{q^2}$-subspace, hence the set of roots of a polynomial of the form $x^{q^2}+cx$.
After replacing $A$, $B$ with an equivalent pair we
may assume that $c=-1$, which means assuming that the $\F_{q^2}$-subspace under consideration is the subfield $\F_{q^2}$.
This means that $x^{q^2}-x$ divides both $A(x)$ and $B(x)$, whence
easily
$A(x)=x^{q^3}+ax^{q^2}-x^q-ax$
and
$B(x)=x^{q^3}+bx^{q^2}-x^q-bx$,
and so
\begin{align*}
C(x)
&=
abx^{2q^2-2}+bx^{2q^2-q-1}-ax^{q^2+q-2}
\\&\qquad
-2abx^{q^2-1}
\\&\qquad
-bx^{q^2-q}+ax^{q-1}+ab.
\\&=
(x^{q^2-1}-1)
(abx^{q^2-1}+bx^{q^2-q}-ax^{q-1}-ab).
\end{align*}

Because all polynomials involved can be expressed as polynomials in $x^{q-1}$ we conveniently set
$y=x^{q-1}$, and so
\[
C(x)=(y^{q+1}-1)(aby^{q+1}+by^q-ay-ab).
\]
The derivative criterion shows that the second factor of $C(x)$ shown above has distinct
roots unless $ab=1$, in which case it equals $(y^q-a)(y+a^{-1})$, that is to say,
$(y-a^{1/q})^q(y+a^{-1})$.
In that case $C(x)$ has at most $q^2+2q-3$ distinct roots in $\barFq$
(as a polynomial in $x$),
and hence $|A^{-1}\cap B|\le q^2+2q-3$,
as claimed in assertion (4) of the theorem.
(This can be improved to $|A^{-1}\cap B|\le q^2+q-2$ when $a^{q+1}=\pm 1$,
because then the binomial $y+a^{-1}$ divides one of the other factors $y^{q+1}-1$ and $y^q-a$ of $C(x)$.)

Assume $ab\neq 1$ from now on.
Because
\[
(y^{q+1}+a^{-1}y^q-b^{-1}y-1)\cdot y
-(y^{q+1}-1)\cdot (y+a^{-1})
=-b^{-1}y^2+a^{-1}
\]
we see that the two exhibited factors of $C(x)$ are coprime unless
$(b/a)^{(q+1)/2}=1$ for $q$ odd,
and unless $(b/a)^{q+1}=1$ for $q$ even,
and their greatest common divisor equals
$x^{2q-2}-b/a$ in those cases.
Note that this has distinct roots when $q$ is odd, but it has $q-1$ double roots when $q$ is even.
This will account for the distinction between assertions~(1), (2), and~(3) of the theorem.

Our next task is to find the degree of the greatest common divisor of
$x^{q^3}A(1/x)$ and $C(x)$.
To this goal we compute the remainder of the polynomial $x^{q^3}A(1/x)$ modulo
$C(x)/(x^{q^2-1}-1)$.
All congruences in the remainder of the proof will tacitly be modulo the latter
polynomial, that is, modulo its scalar multiple
$y^{q+1}+a^{-1}y^q-b^{-1}y-1$.
We have
\begin{align*}
-bx^{q^3}A(1/x)
&=
aby^{q^2+q+1}+by^{q^2+q}-aby^{q^2}-b
\\&=
(aby^{q+1}+by^q-ab)y^{q^2}-b
\\&\equiv
ay\cdot y^{q^2}-b.
\end{align*}
Now note that
$y^q\equiv
(b^{-1}y+1)/(y+a^{-1})$,
where our assumption $ab\neq 0$ ensures that the denominator is coprime with the modulus.
Consequently, we have
\[
y^{q^2}
\equiv
\frac{b^{-q}y^q+1}{y^q+a^{-q}}
\equiv
\frac{
 b^{-q}\dfrac{b^{-1}y+1}{y+a^{-1}}
 +1}
{\dfrac{b^{-1}y+1}{y+a^{-1}}
 +a^{-q}}
=
\frac{(1+b^{-q-1})y+(a^{-1}+b^{-q})}
 {(a^{-q}+b^{-1})y+(a^{-q-1}+1)}.
\]
Substituting this into our previous congruence we find
\begin{equation}\label{eq:frac}
\begin{split}
-bx^{q^3}A(1/x)
&\equiv
ay\cdot
\frac{(1+b^{-q-1})y+(a^{-1}+b^{-q})}
 {(a^{-q}+b^{-1})y+(a^{-q-1}+1)}
 -b
\\&=
\frac{a(1+b^{-q-1})y^2
+(ab^{-q}-a^{-q}b)y
-(a^{-q-1}+1)b}
 {(a^{-q}+b^{-1})y+(a^{-q-1}+1)}.
\end{split}
\end{equation}
Hence the greatest common divisor of $x^{q^3-1}A(1/x)$ and
$y^{q+1}+a^{-1}y^q-b^{-1}y-1$
divides the numerator of this expression.

If that numerator vanishes, that is, if $a^{q+1}=b^{q+1}=-1$, then
the factor $y^{q+1}+a^{-1}y^q-b^{-1}y-1$ of $C(x)$
divides $x^{q^3}A(1/x)$, and by assumption so does the other factor $y^{q+1}-1$ of $C(x)$.
We conclude that the greatest common divisor of $x^{q^3}A(1/x)$
and $B(x)$, which divides $C(x)$ but has distinct roots, equals the least common multiple of
the two factors $x^{q^2-1}-1$
and $x^{q^2-1}+a^{-1}x^{q^2-q}-b^{-1}x^{q-1}-1$ of $C(x)$.
According to an earlier calculation, when $q$ is odd this
has degree $2q^2-2$ if $a^{(q+1)/2}\neq b^{(q+1)/2}$,
and $2q^2-2q$ otherwise,
proving assertions~(1) and~(2) of the theorem.
When $q$ is even it has degree $2q^2-q-1$,
as stated in assertion~(3).

If the numerator of the expression found in Equation~\eqref{eq:frac} does not vanish, then
the greatest common divisor of $x^{q^3}A(1/x)$ and $B(x)$
divides the product of that numerator and $x^{q^2-1}-1$, whence
$|A^{-1}\cap B|\le q^2+2q-3$,
as claimed in assertion~(4).
\end{proof}

We briefly pause to comment on the geometric interpretation of the intersection $A^{-1}\cap B$
in the case considered above, as a subset of the three-dimensional
$\F_q$-space $B$.
With $A(x)$ and $B(x)$ as in Theorem~\ref{thm:q^3_full}, we have
\[
x^{q+2}\cdot C(x)
=
(x^{q^2}-x)
(abx^{q^2+q}+bx^{q^2+1}-ax^{2q}-abx^{q+1}).
\]
The monomials $x$, $x^q$ and $x^{q^2}$ determine $\F_q$-linear
maps $B\to\barFq$,
and so they uniquely extend to independent $\barFq$-linear
coordinates $x_0$, $x_1$ and $x_2$ on the linear space $B\otimes_{\F_q}\barFq$.
With this interpretation, the two factors in the above factorisation of $x^{q+2}\cdot C(x)$
may be viewed as representing a linear form and a quadratic form on
$B\otimes_{\F_q}\barFq$, namely, $x_2-x_0$ and
$abx_1x_2+bx_0x_2-ax_1^2-abx_0x_1$.
The quadratic form is nonsingular provided $ab\neq 1$,
as we assume in the rest of this discussion.

In the projective plane $\Proj^2(\barFq)$ associated with the linear space
$B\otimes_{\F_q}\barFq$, this means that the roots of $C(x)$
represent the $\F_q$-rational points of the union of a line, which is defined over $\F_q$, and a
nonsingular conic, which may or may not be defined over $\F_q$.

The first three assertions of Theorem~\ref{thm:q^3_full}
correspond to the case where the conic is defined over $\F_q$, and the further distinction depends on
whether the line is external, secant, or tangent to the conic, with the first two cases occurring only for $q$ odd,
and the last case only for $q$ even.
An alternate presentation of this configuration for $A^{-1}\cap B$, using ideas from finite geometries and limited to $q$ odd, is given in Section~4 of
Csajb\'{o}k's paper~\cite{Csajbok:inverse-closed}.

To complete our geometric interpretation of Theorem~\ref{thm:q^3_full}, when the conic under consideration is not defined over $\F_q$,
its $\F_q$-rational points belong also to the conic obtained from
it by applying the Frobenius map $\alpha\mapsto\alpha^q$ to its coefficients,
and so they are at most four, as they lie on the intersection of
two distinct nonsingular conics.
However, a simple calculation, of which we will sketch a more complex version for cubics in the proof of Theorem~\ref{thm:q^3_Weil},
shows that at most two of the intersection points of the conics over $\barFq$ are $\F_q$-rational.
Adding to those the number of points on the line provides a geometric interpretation for the bound
$|A^{-1}\cap B|/(q-1)\le q+3$ obtained for that case in~Theorem~\ref{thm:q^3_full}.

Part of the argument in the proof of Theorem~\ref{thm:q^3_full}
applies to pairs of higher-dimensional
subspaces in a special configuration described in the following result,
which will be needed later, in the proof of Theorem~\ref{thm:d_large}.

\begin{theorem}\label{thm:q^d}
Let $A$ and $B$ be $\F_q$-subspaces of $\barFq$ of size $q^d\ge q^3$,
and suppose that $(A^{-1}\cap B)\cup\{0\}$ contains a one-dimensional $\F_{q^{d-1}}$-subspace of
$\barFq$.
Then
$|A^{-1}\cap B|\le q^{d-1}+2q^2-3$.
\end{theorem}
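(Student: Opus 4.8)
The plan is to exploit the hypothesis to pin down $A$ and $B$ explicitly, factor the polynomial $C(x)$ of~\eqref{eq:C(x)}, and then bound the number of roots of its second factor that actually lie in $A^{-1}\cap B$. Since $(\gamma^{-1}A,\gamma B)$ is equivalent to $(A,B)$ and carries $A^{-1}\cap B$ to $\gamma(A^{-1}\cap B)$, I would first normalize so that the given one-dimensional $\F_{q^{d-1}}$-subspace is the subfield $\F_{q^{d-1}}$ itself. Then $\F_{q^{d-1}}^\ast\subseteq A^{-1}\cap B$ forces $\F_{q^{d-1}}\subseteq B$ and, a field being inverse-closed, also $\F_{q^{d-1}}\subseteq A$. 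Hence $x^{q^{d-1}}-x$ divides both $A(x)$ and $B(x)$, and by symbolic division in the ring of linearized ($q$-)polynomials (the cofactor has $q$-degree $1$) one gets $A(x)=x^{q^d}+ax^{q^{d-1}}-x^q-ax$ and $B(x)=x^{q^d}+bx^{q^{d-1}}-x^q-bx$ for some $a,b\in\barFq^\ast$, exactly as in Theorem~\ref{thm:q^3_full}. A direct computation then factors
\[
C(x)=(x^{q^{d-1}-1}-1)\,\bigl(abx^{q^{d-1}-1}+bx^{q^{d-1}-q}-ax^{q-1}-ab\bigr).
\]
The first factor contributes precisely the $q^{d-1}-1$ elements of $\F_{q^{d-1}}^\ast\subseteq A^{-1}\cap B$, so it remains to show that the second factor $R(x)$ has at most $2q^2-2$ roots in $A^{-1}\cap B$.

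Next I would pass to $\F_q^\ast$-orbits. Both $A^{-1}$ and $B$ are $\F_q$-subspaces, so $A^{-1}\cap B$ is invariant under multiplication by $\F_q^\ast$, and every polynomial in sight is a polynomial in $y:=x^{q-1}$. Thus the roots of $R$ in $A^{-1}\cap B$ occur in full orbits of size $q-1$, and it suffices to bound by $2(q+1)$ the number of distinct values $y_0=x_0^{q-1}$ so arising. Writing $m=(q^{d-1}-1)/(q-1)$, the factor $R$ becomes a scalar times $\tilde R(y)=y^m+a^{-1}y^{m-1}-b^{-1}y-1$, while the polynomial $x^{q^d}A(1/x)$ cutting out $A^{-1}$ becomes a polynomial $G(y)$. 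Using only the relation $y^m\equiv -a^{-1}y^{m-1}+b^{-1}y+1\pmod{\tilde R}$ together with the identity $qm=q^{d-1}+(m-1)$, a short reduction collapses $G(y)$ modulo $\tilde R$ to the binomial $1-ab^{-1}y^{q^{d-1}+1}$. Hence every relevant $y_0$ satisfies both $\tilde R(y_0)=0$ and $y_0^{q^{d-1}+1}=a^{-1}b$.

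The crux is to combine these two constraints into a single low-degree equation. Rewriting $\tilde R(y_0)=0$ as $y_0^{m-1}(y_0+a^{-1})=b^{-1}y_0+1$, raising to the $(q-1)$-th power, and using the key arithmetic identity $(m-1)(q-1)=q^{d-1}-q$ together with $y_0^{q^{d-1}}=a^{-1}by_0^{-1}$, I can clear denominators to reach a relation between $y_0$ and $y_0^q$ of the shape $a^{-1}b(y_0^q+a^{-q})(b^{-1}y_0+1)=y_0^{q+1}(b^{-q}y_0^q+1)(y_0+a^{-1})$. Substituting $y_0^q$ turns this into a single-variable polynomial of degree exactly $2q+2$, with visibly nonzero leading coefficient $b^{-q}$, so there are at most $2q+2$ admissible values of $y_0$. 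The main obstacle is precisely this step: engineering the combination so that the enormous exponents $q^{d-1}$ and $qm$ cancel and leave a relation of degree $O(q)$ rather than $O(q^{d-1})$, and verifying it is not identically zero. I expect the degenerate case $ab=1$ to require separate treatment, since there the intermediate inversions break down; in that case $\tilde R$ factors as $(y+a^{-1})(y^{m-1}-a)$ and the same manipulation yields the sharper constraint $(ay_0)^{q+1}=1$, bounding the number of $y_0$ by $q+2$. Either way one obtains at most $2q+2$ values of $y_0$, hence at most $(q-1)(2q+2)=2q^2-2$ roots of $R$ in $A^{-1}\cap B$; adding the $q^{d-1}-1$ elements of the subspace gives $|A^{-1}\cap B|\le q^{d-1}+2q^2-3$.
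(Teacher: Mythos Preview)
Your argument is correct and follows essentially the same route as the paper: normalize so that the $\F_{q^{d-1}}$-subspace is $\F_{q^{d-1}}$, factor $C(x)$ as $(x^{q^{d-1}-1}-1)$ times a second factor, and then show that the second factor and $x^{q^d}A(1/x)$ have at most $2q+2$ common roots in the variable $y=x^{q-1}$. Your reduction of $G(y)$ modulo $\tilde R$ to $1-ab^{-1}y^{q^{d-1}+1}$ is exactly the first congruence the paper obtains (their $ay\cdot y^{q^{d-1}}-b$ after scaling by $-b$), and your trick of raising $\tilde R(y_0)=0$ to the $(q-1)$-th power and substituting $y_0^{q^{d-1}+1}=a^{-1}b$ is algebraically equivalent to the paper's use of the congruence $y^{m-1}\equiv(b^{-1}y+1)/(y+a^{-1})$ together with its $q$-th power. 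Indeed, expanding your relation $a^{-1}b(y_0^q+a^{-q})(b^{-1}y_0+1)=y_0^{q+1}(b^{-q}y_0^q+1)(y_0+a^{-1})$ and multiplying by $a$ gives precisely the paper's numerator $ab^{-q}y^{2q+2}+b^{-q}y^{2q+1}+ay^{q+2}-by^q-a^{-q}y-ba^{-q}$.

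Two minor remarks. First, $A^{-1}$ is not an $\F_q$-subspace (it is not additively closed); what you need, and what is true, is that $A^{-1}$ is stable under multiplication by $\F_q^\ast$, since $A$ is. Second, the separate treatment of $ab=1$ is unnecessary: your final degree-$(2q+2)$ equation is obtained by polynomial manipulations (clearing denominators, not dividing), and its leading coefficient $b^{-q}$ is nonzero regardless of whether $ab=1$.
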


\begin{proof}
We argue in a similar way as in the proof of Theorem~\ref{thm:q^3_full}.
After replacing $A$, $B$ with an equivalent pair we
may assume that $x^{q^d}-x$ divides both $A(x)$ and $B(x)$, whence
$A(x)=x^{q^d}+ax^{q^{d-1}}-x^q-ax$
and
$B(x)=x^{q^d}+bx^{q^{d-1}}-x^q-bx$,
with $ab\neq 0$, and so
\begin{align*}
C(x)
&=
(x^{q^{d-1}-1}-1)
(abx^{q^{d-1}-1}+bx^{q^{d-1}-q}-ax^{q-1}-ab).
\end{align*}
We shall compute the remainder of the polynomial $x^{q^4}A(1/x)$
modulo the second factor of $C(x)$.
Working modulo that polynomial we have
\begin{align*}
-bx^{q^d}A(1/x)
&=
abx^{q^d-1}+bx^{q^d-q}-abx^{q^d-q^{d-1}}-b
\\&=
(abx^{q^{d-1}-1}+bx^{q^{d-1}-q}-ab)x^{q^d-q^{d-1}}-b
\\&\equiv
ax^{q-1}\cdot x^{q^d-q^{d-1}}-b.
\end{align*}
Now we conveniently set $y=x^{q-1}$.
Because
$y^{q^{d-2}+\cdots+q}\equiv
(b^{-1}y+1)/(y+a^{-1})$,
we have
\begin{align*}
y^{q^{d-1}+\cdots+q^2}
&\equiv
\frac{b^{-q}y^q+1}{y^q+a^{-q}},
\end{align*}
and hence
\begin{align*}
-bx^{q^d}A(1/x)
&\equiv
ay\cdot y^{q^{d-1}}-b
\\&=
\frac{1}{y^{q^{d-2}+\cdots+q}}
(
ay\cdot y^q\cdot y^{q^{d-1}+\cdots+q^2}
)-b
\\&\equiv
ay^{q+1}\cdot
\frac{y+a^{-1}}{b^{-1}y+1}\cdot
\frac{b^{-q}y^q+1}{y^q+a^{-q}}
-b
\\&\equiv
\frac{ab^{-q}y^{2q+2}+b^{-q}y^{2q+1}+ay^{q+2}-by^q-a^{-q}y-ba^{-q}}
{(b^{-1}y+1)(y^q+a^{-q})}.
\end{align*}
Because the numerator of this expression is nonzero, its degree $(2q+2)(q-1)=2q^2-2$,
in the original indeterminate $x$, is an upper
bound for the degree of the greatest common divisor of
$x^{q^{d-1}-1}+a^{-1}x^{q^{d-1}-q}-b^{-1}x^{q-1}-1$
and $x^{q^d}A(1/x)$.
The desired conclusion follows by taking the other factor $x^{q^{d-1}-1}-1$ of $C(x)$ into account.
\end{proof}

\section{A better bound for subspaces of dimension at least four}\label{sec:higher-dim}

The proof of Theorem~\ref{thm:bound} which we gave in
Section~\ref{sec:meet} shows that all elements of $A^{-1}\cap B$
are roots of the polynomial $C(x)$
of Equation~\eqref{eq:C(x)}, and hence of the modified polynomial
\begin{align*}
C_0(x)
&=x^{1+(1+q+q^2+\cdots+q^{d-2})}\cdot C(x)
\\&=
x^{1+q+q^2+\cdots+q^{d-1}}
\biggl(1-
\bigl(\sum_{i=0}^{d-1}a_i/x^{q^i}\bigr)\cdot
\bigl(\sum_{j=0}^{d-1}b_jx^{q^j}\bigr)
\biggr).
\end{align*}
The latter has the advantage of being a linear combination of monomials,
each of whose degrees is a sum of $d$ terms taken from the set
$\{1,q,\ldots,q^{d-1}\}$ with at most one repetition,
and hence one omission.
We now show how being a root of $C_0(x)$ can be interpreted as being a zero of one or more homogeneous forms of degree $d$
on the $\F_q$-space $B$.

The $\F_q$-linear maps $x_i:B\to\barFq$ given by $x\mapsto x^{q^i}$, for $0\le i<d$,
are $\F_q$-linearly independent, and so they
form a complete set of linear coordinates (that is, a basis of the dual space) on the $\barFq$-space
$B\otimes_{\F_q}\barFq$.
Hence to $C_0(x)$ there corresponds a homogeneous polynomial
function
\[
E(x_0,\ldots,x_{d-1})
=
x_0\cdots x_{d-1}\cdot
\biggl(1-
\bigl(\sum_{i=0}^{d-1}a_i/x_i\bigr)\cdot
\bigl(\sum_{j=0}^{d-1}b_jx_j\bigr)
\biggr),
\]
of degree $d$, defined on the $\barFq$-space
$B\otimes_{\F_q}\barFq$.
We would rather need a polynomial function on the original $\F_q$-space $B$, but we can obtain that
through a linear change of coordinates.
In fact, an arbitrary $\F_q$-linear map on $B$ with values in $\F_q$
is given by
\begin{multline*}
x
\mapsto
b_0\gamma x
+(b_0^q\gamma^q+b_1\gamma)x^q
+(b_0^{q^2}\gamma^{q^2}+b_1^q\gamma^q+b_2\gamma)x^{q^2}
+\cdots
\\
\cdots
+(b_0^{q^{d-1}}\gamma^{q^{d-1}}+b_1^{q^{d-2}}\gamma^{q^{d-2}}+\cdots+b_{d-1}\gamma)x^{q^{d-1}},
\end{multline*}
where $\gamma$ ranges over the roots of the $q$-polynomial
$b_0^{q^d}x^{q^d}+b_1^{q^{d-1}}x^{q^{d-1}}+\cdots+b_dx$.
Therefore, a complete set of
$\F_q$-linear coordinates $z_0,\ldots,z_{d-1}$ on the $\F_q$-space $B$
is obtained by letting $\gamma$ range over an $\F_q$-basis of the
roots of that $q$-polynomial.
After expressing each $x_i$ in terms of $z_0,\ldots,z_{d-1}$ (as linear combinations over $\barFq$),
the polynomial function $E(x_0,\ldots,x_{d-1})$ on $B\otimes_{\F_q}\barFq$ yields a polynomial function
$\tilde E(z_0,\ldots,z_{d-1})$ on $B$, but still with values in $\barFq$,
which is homogeneous of degree $d$ in $z_0,\ldots,z_{d-1}$.

To recapitulate in slightly different wording, all elements of $A^{-1}\cap B$,
once $B$ is identified with $\F_q^d$ via the coordinates $z_i$,
are roots of the polynomial
$\tilde E(z_0,\ldots,z_{d-1})\in\barF_q[z_0,\ldots,z_{d-1}]$.
According to Theorem~\ref{thm:form} below, this polynomial turns out to be usually irreducible for our purposes,
with notable exceptions where our geometric problem has already
been dealt with in Section~\ref{sec:three_special}.
When the polynomial is indeed irreducible, it defines a hypersurface in the projective space
$\Proj(B\otimes_{\F_q}\barFq)\cong\Proj^{d-1}(\barFq)$,
whose number of $\F_q$-rational points can be bounded using the
Lang-Weil bound~\cite{Lang-Weil}.
These are the ideas at play in the proof of our
Theorem~\ref{thm:d_large} stated in the Introduction, which we give below,
but not before stating the result on the possible factorisations of $E$ which we have just mentioned.

\begin{theorem}\label{thm:form}
The homogeneous polynomial
\[
E=E(x_1,\ldots,x_n)
=
x_1\cdots x_n\cdot
\biggl(1+
\bigl(\sum_{i=1}^na_i/x_i\bigr)\cdot
\bigl(\sum_{j=1}^nb_jx_j\bigr)
\biggr)
\]
where $a_i,b_j\in\barFq$,
has at most two non-monomial (absolutely) irreducible factors.
If it has two then at least one of them is a linear combination of exactly two of the indeterminates.

Furthermore, if $E$ has $x_1+x_2$ as a factor, then either
\begin{multline*}
E/(x_3\cdots x_n)=
x_1x_2\cdot\bigl(
1+(a/x_1+(a+c)/x_2)\cdot(bx_1+(b-c^{-1})x_2)
\bigr)
\\
=
(x_1+x_2)\bigl((a+c)bx_1+a(b-c^{-1})x_2\bigr)
\end{multline*}
for some $a,b,c\in\barFq$ with $c\neq 0$, or
\begin{multline*}
E/(x_4\cdots x_n)=
x_1x_2x_3\cdot\bigl(
1+(a/x_1+a/x_2-1/x_3)\cdot(bx_1+bx_2-x_3)
\bigr)
\\
=
(x_1+x_2)(abx_2x_3+abx_1x_3+bx_1x_2-ax_3^2),
\end{multline*}
up to permuting the indeterminates $x_3,\ldots,x_n$.
\end{theorem}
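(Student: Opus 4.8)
The plan is to exploit the rigid monomial structure of $E$. Writing $E=x_1\cdots x_n+P\cdot M$ with $M=\sum_j b_jx_j$ and $P=\sum_i a_i\prod_{k\neq i}x_k$, one sees by inspection that every monomial of $E$ has all exponents at most $2$, so that $\deg_{x_\ell}E\le 2$ for each $\ell$. Two restriction identities will drive the argument: setting $x_\ell=0$ gives $E|_{x_\ell=0}=a_\ell\bigl(\prod_{k\neq\ell}x_k\bigr)\bigl(\sum_{j\neq\ell}b_jx_j\bigr)$, while dually the coefficient of $x_\ell^2$ in $E$ equals $b_\ell\sum_{i\neq\ell}a_i\prod_{k\neq i,\ell}x_k$. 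First I dispose of the case in which every variable divides $E$: then $x_1\cdots x_n\mid E$ and, by degree, $E$ is a scalar multiple of $x_1\cdots x_n$, so there is no non-monomial factor. Hence I may fix a variable, say $x_n$, not dividing $E$; by the first identity this means $a_n\neq 0$ and $M_n:=\sum_{j<n}b_jx_j\neq 0$.

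To bound the number of non-monomial factors, I regard $E$ as a polynomial of degree at most $2$ in $x_n$, say $E=Ux_n^2+Vx_n+W$ with $W=E|_{x_n=0}=a_n\bigl(\prod_{k<n}x_k\bigr)M_n$. An irreducible factor involving $x_n$ contributes a positive amount to $\deg_{x_n}E\le 2$, so at most two such factors occur; an irreducible factor free of $x_n$ divides $\gcd(U,V,W)$, hence divides the essentially squarefree $W$, and being non-monomial must be a scalar multiple of $M_n$. This yields a preliminary bound of three. The heart of the matter, and the step I expect to be hardest, is to rule out this borderline configuration, in which $M_n\mid E$ while two further factors involve $x_n$. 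Here I restrict the putative factorisation to a second coordinate hyperplane $x_\ell=0$ with $a_\ell\neq 0$ and compare with $E|_{x_\ell=0}=a_\ell\bigl(\prod_{k\neq\ell}x_k\bigr)M_\ell$: the $x_n$-free factor then has restriction $M_n|_{x_\ell=0}$ dividing $a_\ell\bigl(\prod_{k\neq\ell}x_k\bigr)M_\ell$, which forces $M_n|_{x_\ell=0}$ to be proportional to a single coordinate or to $M_\ell$. Running this over all admissible $\ell$ pins down the vanishing pattern of the $b_j$ so severely that only two or three of the variables remain \emph{active}, the rest being monomial factors; the problem then collapses to the two- and three-variable computations already carried out, and the count drops to two. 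The same bookkeeping shows that when two non-monomial factors occur, one of them restricts on a suitable coordinate hyperplane to a single linear form supported on just two variables, which is the asserted binomial factor.

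For the explicit classification I impose $x_1+x_2\mid E$, equivalently the identity $E|_{x_2=-x_1}\equiv 0$. Substituting $x_2=-x_1$ and collecting powers of $x_1$ yields three polynomial conditions in $x_3,\ldots,x_n$, from the coefficients of $x_1^3$, $x_1^2$ and $x_1$: namely $(b_1-b_2)\sum_{i\ge 3}a_i\prod_{3\le k\neq i}x_k=0$, then $(a_1-a_2)\bigl(\prod_{k\ge 3}x_k\bigr)\bigl(\sum_{j\ge 3}b_jx_j\bigr)=0$, and finally $\bigl[-1+(a_2-a_1)(b_1-b_2)\bigr]\prod_{k\ge 3}x_k=\bigl(\sum_{i\ge 3}a_i\prod_{3\le k\neq i}x_k\bigr)\bigl(\sum_{j\ge 3}b_jx_j\bigr)$. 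If $\sum_{j\ge 3}b_jx_j=0$, the last relation gives $(a_2-a_1)(b_1-b_2)=1$ and the first forces $a_3=\cdots=a_n=0$; only $x_1,x_2$ stay active, $x_3\cdots x_n$ are monomial factors, and one recovers the first displayed form, a binary quadratic splitting into two linear binomials. Otherwise $a_1=a_2$, and the last relation requires the product of an $(n-3)$-form with a linear form to equal the single squarefree monomial $\prod_{k\ge 3}x_k$; this is possible only when exactly one further variable remains active, giving the second displayed form after relabelling. In each case the stated factorisation is confirmed by direct expansion. Thus the whole difficulty is concentrated in eliminating the third non-monomial factor of the previous paragraph; once the degenerate alignments are shown to reduce the number of active variables, everything reduces to the small cases implicit in Theorems~\ref{thm:q^3_full} and~\ref{thm:q^d}.
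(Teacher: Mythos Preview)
Your approach is genuinely different from the paper's. The paper never singles out a variable; it fixes an arbitrary factorisation $E=FG$ into non-monomial pieces and exploits the combinatorial constraint that every factor of $E$ has degree $\le 2$ in each $x_i$ and joint degree $\le 3$ in each pair $x_i,x_j$ (its Property~\ref{property}), splitting into cases according to whether $F$ or $G$ is multilinear and, in the multilinear case, according to how many variables two chosen monomials of $F$ jointly occupy. Your idea of writing $E=Ux_n^2+Vx_n+W$ and reading the $x_n$-free factors off the explicitly factored constant term $W=a_n\bigl(\prod_{k<n}x_k\bigr)M_n$ is elegant and gives the preliminary bound of three with almost no effort.

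Your treatment of the ``Furthermore'' clause is correct and arguably cleaner than the paper's Lemma~\ref{lemma:exceptions}: substituting $x_2=-x_1$ and equating the three resulting polynomial coefficients to zero does the job (you have interchanged the labels ``coefficient of $x_1^2$'' and ``coefficient of $x_1$'', but the three equations themselves are right, and the dichotomy $M'=0$ versus $M'\neq 0$ correctly produces the two displayed forms).

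However, the two principal assertions of the theorem --- that there are at most two non-monomial irreducible factors, and that when there are two one of them is a binomial --- are not proved. The step you flag as the heart of the matter is only sketched, and the sketch does not close. Restricting to $x_\ell=0$ gives $M_n|_{x_\ell=0}\mid a_\ell\bigl(\prod_{k\neq\ell}x_k\bigr)M_\ell$; when $M_n|_{x_\ell=0}$ is still a non-monomial linear form this forces $b_n=0$, but that alone does not cut down the support of $M_n$, and when $b_\ell\neq 0$ the restriction $M_n|_{x_\ell=0}$ may collapse to a monomial, from which nothing follows. So ``running this over all admissible $\ell$'' does not pin down the $b_j$ as claimed. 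Worse, your assertion that one of two non-monomial factors must be a binomial is not established at all: in the case where one factor is $M_n$, nothing you have shown prevents $M_n$ from involving three or more of the $x_j$; and in the case where both factors involve $x_n$, your setup says nothing about their shape. The paper handles precisely this by proving first (its Lemma~\ref{lemma:linear}) that any non-monomial \emph{linear} factor of $E$ is a binomial, and then showing via the case analysis mentioned above that any factorisation $E=FG$ into non-monomial factors forces one of $F,G$ to be a monomial times a linear form. Your outline would need a replacement for both of these steps.
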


The harmless sign change in the definition of $E$ from the previous notation will avoid the occurrence of several minus signs in the proof
of Theorem~\ref{thm:form}.
We have also conveniently shifted the indices of the indeterminates, which now start from one.
The proof of Theorem~\ref{thm:form} is a little technical, and we postpone it to Section~\ref{sec:form}
to avoid disrupting the flow of the present argument.
Note that the second nontrivial factorisation allowed by Theorem~\ref{thm:form}
has already occurred in disguise in the factorisations of $C(x)$
obtained in the proofs of Theorems~\ref{thm:q^3_full} and~\ref{thm:q^d}.

\begin{proof}[Proof of Theorem~\ref{thm:d_large}]
Continue with the setting introduced above.
As we noted in the proof of Theorem~\ref{thm:bound},
the condition $A^{-1}\not\subseteq B$ implies that $C(x)$
is not the zero polynomial, whence
$E(x_0,\ldots,x_{d-1})$ is not the zero polynomial.
Monomial factors of $E(x_0,\ldots,x_{d-1})$
and, correspondingly, of $C(x)$,
clearly give no contribution to estimating $|A^{-1}\cap B|$.

Suppose first that $E(x_0,\ldots,x_{d-1})$ has a unique non-monomial irreducible factor
$F(x_0,\ldots,x_{d-1})$, of degree $d'\le d$.
Let $\tilde F(z_0,\ldots,z_{d-1})$ be the corresponding polynomial
written in terms of the coordinates $z_0,\ldots,z_{d-1}$ on the $\F_q$-space $B$.
It defines an irreducible algebraic subvariety of the
projective space
$\Proj(B\otimes_{\F_q}\barFq)\cong\Proj^{d-1}(\barFq)$,
of dimension $d-2$ (that is, a hypersurface).

If that variety is defined over $\F_q$, which occurs if $\tilde F(z_0,\ldots,z_{d-1})$,
after multiplication by a suitable scalar, can be made to have all coefficients in
$\F_q$, then according to the Lang-Weil estimate~\cite{Lang-Weil}
its number of $\F_q$-rational points is bounded above by
\begin{equation}\label{eq:Lang-Weil}
q^{d-2}+(d'-1)(d'-2)q^{d-(5/2)}+C_{d,d'}\cdot q^{d-3},
\end{equation}
where $C_{d,d'}$ is a constant which depends only on $d$ and $d'$.
The desired conclusion is then obtained upon multiplication by
$q-1$ and using the fact that $d'\le d$.

Now suppose that the variety under consideration is not defined
over $\F_q$.
Then the Galois-conjugate polynomial $\tilde F^\sigma(z_0,\ldots,z_{d-1})$,
obtained from $\tilde F(z_0,\ldots,z_{d-1})$ by applying the Frobenius
automorphism $\sigma:a\mapsto a^q$ to each coefficient,
is not proportional to $\tilde F(z_0,\ldots,z_{d-1})$,
and hence together with the latter it defines a (possibly reducible) algebraic set in
$\Proj^{d-1}(\barFq)$, of dimension strictly smaller than $d-2$,
and degree at most $(d')^2$.
According to a standard fact known as the
{\em Schwartz-Zippel lemma,} see~\cite[Lemma A.3]{Tao+:Kakeya}
for a proof, the number of $\F_q$-rational points of this
algebraic set is at most $(d')^2(q+1)^{d-3}$.
After multiplying by $q-1$ we see that the desired conclusion holds in this case as well.

Now we may assume that $E(x_0,\ldots,x_{d-1})$ has at least two non-monomial irreducible factors.
Then it has exactly two according to Theorem~\ref{thm:form},
and one of them is a linear combination of
two of the indeterminates, say $x_i$ and $x_j$ with $i<j$.
The corresponding factor of our original polynomial $C(x)$
is then a linear combination of $x^{q^i-1}$ and $x^{q^j-1}$,
and hence it accounts for at most $q^{j-i}-1$ distinct nonzero roots of that polynomial.
The possible factorisations of $E$ given in Theorem~\ref{thm:form}
show that the remaining factor of $C(x)$ has degree at most $q^{d-1}-1$,
whence $|A^{-1}\cap B|\le q^{d-1}+q^{j-i}-2$.
If $j-i<d-1$ we have reached our goal, hence assume $j-i=d-1$.
This means that the former factor of
$C(x)$ considered above is a linear combination of $1$ and $x^{q^{d-1}-1}$.
Possibly after replacing $(A,B)$ with an equivalent pair we may assume that linear combination to be $x^{q^{d-1}-1}-1$.

Now consider, in turn, the two possible factorisations of $E$ stated in Theorem~\ref{thm:form},
and what they entail for the polynomials $A(x)$ and $B(x)$ in our setting.
The former factorisation implies
$A(x)=x^{q^d}+ax^{q^{d-1}}-(a+c)x$
and
$B(x)=x^{q^d}-bx^{q^{d-1}}+(b-c^{-1})x$,
whence
\begin{align*}
C(x)
&=
(x^{q^{d-1}-1}-1)
\bigl(-(a+c)bx^{q^{d-1}-1}+a(b-c^{-1})\bigr).
\end{align*}
Because $B(x)\equiv x^q-c^{-1}x\pmod{x^{q^{d-1}-1}-1}$, the polynomial $B(x)$ has at most $q-1$ nonzero roots in common with
the former factor of $C(x)$, and similarly with the latter factor.
Hence in this case we have $|A^{-1}\cap B|\le 2q-2$.

The other possible factorisation of $E$ yields
$A(x)=x^{q^d}+ax^{q^{d-1}}-cx^{q^e}-ax$
and
$B(x)=x^{q^d}+bx^{q^{d-1}}-c^{-1}x^{q^e}-bx$,
with $0<e<d-1$ and $abc\neq 0$, and so
\begin{align*}
C(x)
&=
(x^{q^{d-1}-1}-1)
(abx^{q^{d-1}-1}+bcx^{q^{d-1}-q^e}-ac^{-1}x^{q^e-1}-ab).
\end{align*}
Because $B(x)\equiv x^q-c^{-1}x^{q^e}\pmod{x^{q^{d-1}-1}-1}$, the polynomial $B(x)$ has at most $q-1$ nonzero roots in common with
the former factor of $C(x)$, whence
$|A^{-1}\cap B|\le q^{d-1}+q-2$,
except when $c=1$ and $e=1$.
However, in the latter case Theorem~\ref{thm:q^d} applies and yields
$|A^{-1}\cap B|\le q^{d-1}+2q^2-3$.
\end{proof}

\section{A classification of pairs of three-dimensional subspaces\\
with large intersection $A^{-1}\cap B$}\label{sec:classification}

In the case of subspaces of dimension $d=3$, which was excluded from Theorem~\ref{thm:d_large}, parts of its proof still apply.
In particular, the Lang-Weil estimate of Equation~\eqref{eq:Lang-Weil} takes
the more precise form of the Hasse-Weil bound, and allows us to prove the following result.

\begin{theorem}\label{thm:q^3_Weil}
Let $A$ and $B$ be $\F_q$-subspaces of $\barFq$, of size $q^3$, with $A^{-1}\not\subseteq B$.
If $|A^{-1}\cap B|/(q-1)>q+1+\lfloor 2\sqrt{q}\rfloor$, then
$|A^{-1}\cap B|/(q-1)$ equals either $2q+2$ or $2q$ for $q$ odd, and it equals $2q+1$ for $q$ even.
\end{theorem}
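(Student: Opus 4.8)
The plan is to run the argument of the proof of Theorem~\ref{thm:d_large} in the borderline dimension $d=3$, where the Lang--Weil estimate of Equation~\eqref{eq:Lang-Weil} sharpens into the Hasse--Weil bound for a plane cubic. Write $N=|A^{-1}\cap B|/(q-1)$. The first step is to observe that, because $A^{-1}\cap B$ is closed under multiplication by $\F_q^\ast$ and avoids $0$, it is a disjoint union of punctured lines through the origin of $B$, so that $N$ equals the number of projective points it represents, and hence $N$ is at most the number of $\F_q$-rational points of the plane cubic cut out by $\tilde E(z_0,z_1,z_2)$ in $\Proj(B\otimes_{\F_q}\barFq)\cong\Proj^2(\barFq)$, the monomial factors of $E$ contributing nothing since they vanish only at $x=0$. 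Thus it suffices to show that the hypothesis $N>q+1+\lfloor 2\sqrt q\rfloor$ forces $E$ (equivalently $\tilde E$) to factor, and then to read off the surviving possibilities.

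Suppose first that $E$ is absolutely irreducible, so that $\tilde E=0$ is an absolutely irreducible plane cubic $X$. If $X$ is defined over $\F_q$, the Hasse--Weil bound applies: when $X$ is smooth it has genus $1$, so $N\le q+1+\lfloor 2\sqrt q\rfloor$ by the integral form of the Hasse bound, while if $X$ is singular it is rational and a direct count on its normalisation $\cong\Proj^1$ yields $N\le q+2\le q+1+\lfloor 2\sqrt q\rfloor$; either way the hypothesis is contradicted. If $X$ is not defined over $\F_q$, then every $\F_q$-rational point of $X$ also lies on the distinct Galois-conjugate cubic $\tilde E^\sigma=0$, so $N$ is bounded by the number of $\F_q$-rational points of the zero-dimensional scheme $\{\tilde E=0\}\cap\{\tilde E^\sigma=0\}$, which again contradicts the hypothesis. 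Hence $E$ is reducible.

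With $E$ reducible we invoke Theorem~\ref{thm:form} for $n=3$. If $E$ has a single non-monomial irreducible factor, that factor necessarily has degree at most two, hence is a line or a smooth conic, giving $N\le q+1$ and contradicting the hypothesis. Otherwise $E$ has exactly two non-monomial irreducible factors, one a linear combination of two of the indeterminates, and $E$ takes one of the two explicit shapes of Theorem~\ref{thm:form}. Translating these shapes back into $A(x)$ and $B(x)$ exactly as in the reducible case of the proof of Theorem~\ref{thm:d_large} specialised to $d=3$, the first shape forces $|A^{-1}\cap B|\le 2q-2$ and the second shape with $c\neq 1$ forces $|A^{-1}\cap B|\le q^2+q-2$; in both cases $N\le q+2$, contradicting the hypothesis. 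The second shape with $c=1$ yields $A(x)=x^{q^3}+ax^{q^2}-x^q-ax$ and $B(x)=x^{q^3}+bx^{q^2}-x^q-bx$, for which $\F_{q^2}\subseteq A\cap B$, so that $(A^{-1}\cap B)\cup\{0\}$ contains the one-dimensional $\F_{q^2}$-subspace $\F_{q^2}$ and Theorem~\ref{thm:q^3_full} applies. Its cases~(1), (2), (3) give precisely $N=2q+2$, $N=2q$ for $q$ odd, and $N=2q+1$ for $q$ even, whereas its case~(4) gives $N\le q+3\le q+1+\lfloor 2\sqrt q\rfloor$, which the hypothesis excludes. This leaves exactly the three asserted values.

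The main obstacle is the case of an absolutely irreducible cubic $X$ not defined over $\F_q$. The crude B\'ezout bound gives at most nine points in $\{\tilde E=0\}\cap\{\tilde E^\sigma=0\}$, and since $9\le q+1+\lfloor 2\sqrt q\rfloor$ as soon as $q\ge 4$, this already settles all but the smallest $q$. To obtain a clean bound valid for every $q$ one refines the count: writing $\tilde E=G+\theta H$ with $G,H$ defined over $\F_q$ and $\theta$ generating the relevant extension, every $\F_q$-rational point of $X$ satisfies $G=H=0$, while $G$ and $H$ are coprime because $X$ is irreducible, so one is reduced to controlling the $\F_q$-rational common zeros of two $\F_q$-cubics. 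This is the promised cubic analogue of the conic computation sketched after Theorem~\ref{thm:q^3_full}, and it is the only genuinely delicate point; the remaining steps are bookkeeping with the factorisations already supplied by Theorems~\ref{thm:form} and~\ref{thm:q^3_full}.
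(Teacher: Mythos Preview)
Your overall approach coincides with the paper's: reduce to the cubic form $E$, use Hasse--Weil when $E$ is irreducible and defined over $\F_q$, intersect with the Galois conjugate when it is not, and in the reducible case invoke Theorem~\ref{thm:form} and the argument of Theorem~\ref{thm:d_large} to land in the setting of Theorem~\ref{thm:q^3_full}. That part is fine, and your bookkeeping in the reducible case matches the paper's.

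The genuine gap is exactly where you flag it, but your proposed resolution does not work. Writing $\tilde E=G+\theta H$ with $G,H\in\F_q[z_0,z_1,z_2]$ presupposes that the coefficients of $\tilde E$ lie in a quadratic extension of $\F_q$; nothing in the setup guarantees this, since $a_i,b_j$ lie in an arbitrary finite extension. Even when such a decomposition is available, $G$ and $H$ are again cubics, so B\'ezout still gives at most nine common zeros and you have gained nothing over the crude bound. Thus your argument remains incomplete for $q\in\{2,3\}$.

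The paper handles these two values by entirely different devices. For $q=2$ it simply invokes the sharpened bound $|A^{-1}\cap B|\le 3|B|/4-1=5$ of Theorem~\ref{thm:bound}, which already contradicts the hypothesis $N>5$ and makes the irreducible case vacuous. For $q=3$ (and in fact for all $q$) it performs a concrete computation: reduce $C_0(x)^q$ modulo $B(x)$, express both $C_0(x)$ and this reduction in the coordinates $x_0,x_1,x_2$, and observe that the triples $(1,0,0)$, $(b_1,-b_0,0)$, $(b_2,0,-b_0)$ lie on both cubics but never satisfy the rationality conditions $x_1=x_0^q$, $x_2=x_1^q$. This exhibits three non-rational points in the intersection, lowering the B\'ezout count from nine to six, which suffices since the hypothesis for $q=3$ reads $N>7$. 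This explicit identification of spurious intersection points is the ``cubic analogue of the conic computation'' you allude to, but it is not recoverable from a generic $G+\theta H$ splitting.
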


Note that equality in Csajb\'{o}k's bound $|A^{-1}\cap B|\le 2q^2-2$ for three-dimensional spaces cannot be attained in characteristic two.

\begin{proof}
Arguing as in the proof of Theorem~\ref{thm:d_large}, which we gave in Section~\ref{sec:higher-dim}, and aiming at a contradiction,
suppose that $E(x_0,x_1,x_2)$ is irreducible.
Writing this in terms of the coordinates $z_0,\ldots,z_{d-1}$ on the $\F_q$-space $B$
we get a polynomial $\tilde E(z_0,z_1,z_2)$
which defines an irreducible cubic in the projective plane
$\Proj(B\otimes_{\F_q}\barFq)\cong\Proj^{2}(\barFq)$.
If the cubic is defined over $\F_q$, then according to the Hasse-Weil bound its number of $\F_q$-rational points does not exceed
$q+1+2\sqrt{q}$, whence
$|A^{-1}\cap B|/(q-1)\le q+1+2\sqrt{q}$,
which contradicts our hypothesis.
If the cubic is not defined over $\F_q$, then
its intersection with the irreducible cubic defined by
$\tilde E^\sigma(z_0,z_1,z_2)$,
where $\sigma$ is the Frobenius automorphism,
has at most $3^2=9$ points in $\Proj(B\otimes_{\F_q}\barFq)$ according to B\'{e}zout's theorem
(or to the Schwartz-Zippel lemma if we prefer).
Hence $|A^{-1}\cap B|/(q-1)\le 9$,
and we obtain a contradiction because this number does not exceed Weil's bound $q+1+\lfloor 2\sqrt{q}\rfloor$,
except when $q\le 3$.
However, when $q=2$ Theorem~\ref{thm:bound} provides the improved bound
$|A^{-1}\cap B|\le 3\cdot 2^3/4-1=5$, which yields the desired contradiction.

In order to cover the case $q=3$ as well, we sketch how an explicit calculation allows us to
strengthen the upper bound of $9$ given by B\'{e}zout's theorem to the bound
$|A^{-1}\cap B|/(q-1)\le 6$, for arbitrary $q$.
We do that by showing that the intersection of the zero sets of
$\tilde E(z_0,z_1,z_2)$ and its Galois-conjugate
$\tilde E^\sigma(z_0,z_1,z_2)$
contains at least three non-rational points.
One way of computing the Galois-conjugate in terms of the original coordinates $x_0,x_1,x_2$ is
raising the polynomial
\begin{align*}
C_0(x)
=x^{q+2}\cdot C(x)
&=
-a_0b_2x^{2q^2+q}-a_1b_2x^{2q^2+1}-a_0b_1x^{q^2+2q}
\\&\qquad
+(1-a_2b_2-a_1b_1-a_0b_0)x^{q^2+q+1}
\\&\qquad
-a_1b_0x^{q^2+2}-a_2b_1x^{2q+1}-a_2b_0x^{q+2}
\end{align*}
to the $q$-th power and reducing the result modulo the polynomial $B(x)$.
Writing both the remainder of this division and the original polynomial $C_0(x)$ in terms of $x_0=x$, $x_1=x^q$, and $x_2=x^{q^2}$,
one discovers that both vanish for
$(x_0,x_1,x_2)=(1,0,0),(b_1,-b_0,0),(b_2,0,-b_0)$.
However, a triple $(x_0,x_1,x_2)$ gives a rational point of our curve, that is, an element of $B$,
only when $x_1=x_0^q$, $x_2=x_1^q$, and $x_2^q=-b_2x_2-b_1x_1-b_0x_0$,
which is not the case for any of the three triples found.

Thus, we have shown that $E(x_0,x_1,x_2)$ cannot be irreducible.
Invoking Theorem~\ref{thm:form} and arguing as in the proof of Theorem~\ref{thm:d_large}
we see that $|A^{-1}\cap B|$ can possibly be so large only when $C(x)$
has a non-trivial linear combination of $1$ and $x^{q^2}$ as a factor,
which may be taken to be $x^{q^2-1}-1$ after passing to an equivalent pair $(A,B)$.
The proof of Theorem~\ref{thm:d_large} also shows that we must have
the second exceptional factorisation of $E(x_0,x_1,x_2)$ given in Theorem~\ref{thm:form},
and that
$A(x)=x^{q^3}+ax^{q^2}-x^q-ax$
and
$B(x)=x^{q^3}+bx^{q^2}-x^q-bx$.
Consequently, both $A$ and $B$ contain $\F_{q^2}$, hence Theorem~\ref{thm:q^3_full} applies
and completes the proof.
\end{proof}

\begin{rem}\label{rem:comparison}
A special version of Theorem~\ref{thm:q^3_Weil}
occurs as assertion~(1) of~\cite[Theorem~4.8]{Csajbok:inverse-closed}.
That result restricts $A$ and $B$ to be contained in $\F_{q^4}$,
which is a rather strong assumption.
Note that~\cite[Theorem~4.8]{Csajbok:inverse-closed}
is actually proved under the unstated hypothesis that $q$ is odd,
but fails to exclude the possibility that $|A^{-1}\cap B|/(q-1)=2q+1$
(except in the special case where $A=B$).
At the author's request Csajb\'{o}k has produced a proof which excludes that possibility, based on similar methods as~\cite{Csajbok:inverse-closed}.
\end{rem}

Theorem~\ref{thm:q^3_Weil}, combined with the special configuration which we investigated in Theorem~\ref{thm:q^3_full},
allows us to classify all pairs $(A,B)$ attaining equality in Csajb\'{o}k's bound, or almost, in the three-dimensional case.

\begin{theorem}\label{thm:q^3_count}
In the following assertions $A$ and $B$ denote $\F_q$-subspaces of $\barFq$, of size $q^3$, with $A^{-1}\not\subseteq B$.
\begin{enumerate}
\item
There are exactly $(q-1)/2$ equivalence classes of pairs $(A,B)$
such that $|A^{-1}\cap B|=2q^2-2$.
\item
For odd $q>5$ there are exactly $(q+1)/2$ equivalence classes of pairs $(A,B)$
such that $|A^{-1}\cap B|=2q^2-2q$.
\item
For even $q>4$ there are exactly $q$ equivalence classes of pairs $(A,B)$
such that $|A^{-1}\cap B|=2q^2-q-1$.
\item
Each equivalence class described in assertions~(1) and~(2) contains exactly two pairs satisfying $A=B$,
and each equivalence class described in assertion~(3) contains exactly one such pair.
Each such subspace $A=B$ is contained in $\F_{q^4}$, and equals the kernel of
$x\mapsto\Tr_{\F_{q^4}/\F_q}(\alpha x)$,
for some $\alpha\in\F_{q^4}$ with $\alpha^2\in\F_{q^2}\setminus\F_q$.
\end{enumerate}
\end{theorem}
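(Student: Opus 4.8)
The plan is to reduce everything, via Theorem~\ref{thm:q^3_Weil}, to the canonical pairs already classified in Theorem~\ref{thm:q^3_full}, and then to count orbits of an explicit group action on the parameters $(a,b)$. First I would note that the three target values $2q^2-2$, $2q^2-2q$, $2q^2-q-1$ correspond to $|A^{-1}\cap B|/(q-1)$ equal to $2q+2$, $2q$, $2q+1$, and that these exceed the Weil bound $q+1+\lfloor 2\sqrt q\rfloor$ exactly when, respectively, $q$ is arbitrary, $q>5$, and $q>4$; this is precisely the source of the stated restrictions on $q$. Whenever the bound is exceeded, the proof of Theorem~\ref{thm:q^3_Weil} shows that, after replacing $(A,B)$ by an equivalent pair, both $A$ and $B$ contain $\F_{q^2}$ and take the canonical forms $A(x)=x^{q^3}+ax^{q^2}-x^q-ax$ and $B(x)=x^{q^3}+bx^{q^2}-x^q-bx$, to which Theorem~\ref{thm:q^3_full} then applies.

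Next I would pin down the residual equivalence among such canonical pairs. A three-dimensional $\F_q$-subspace contains at most one one-dimensional $\F_{q^2}$-subspace, since two distinct ones would meet in a nonzero $\F_q$-line, which cannot be $\F_{q^2}$-invariant; hence the unique $\F_{q^2}$-line inside each of $A$ and $B$ is $\F_{q^2}$ itself. Consequently a scalar $\gamma$ with $(\gamma^{-1}A,\gamma B)$ again canonical must fix $\F_{q^2}$, forcing $\gamma\in\F_{q^2}^\ast$, and such a $\gamma$ acts by $a\mapsto a\gamma^{1-q}$ and $b\mapsto b\gamma^{q-1}$. Writing $\delta=\gamma^{q-1}$, which ranges over the norm-one subgroup $\mu_{q+1}$, the residual action is the free action $(a,b)\mapsto(a\delta^{-1},b\delta)$ on parameters, whose orbits are indexed by the invariant $ab\in\mu_{q+1}$ and all have size $q+1$.

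The three counts then follow by rewriting the conditions of Theorem~\ref{thm:q^3_full} in terms of $ab$. For $q$ odd one has $a^{q+1}=b^{q+1}=-1$, so $a^{(q+1)/2}$ and $b^{(q+1)/2}$ are the two square roots of $-1$ and $(ab)^{(q+1)/2}=a^{(q+1)/2}b^{(q+1)/2}$ equals $+1$ exactly when $a^{(q+1)/2}\neq b^{(q+1)/2}$; thus assertion~(1) corresponds to $ab$ being a square in $\mu_{q+1}$ and assertion~(2) to a nonsquare, giving $(q+1)/2$ orbits each. Since $ab=1$ is a square, the side condition $ab\neq1$ removes one orbit from case~(1) only, yielding $(q-1)/2$ and $(q+1)/2$. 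For $q$ even the square/nonsquare dichotomy disappears and the sole constraint is $ab\neq1$ in the odd-order group $\mu_{q+1}$, leaving $q$ orbits, as in assertion~(3).

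For assertion~(4), within a fixed orbit I would count the pairs with $A=B$ by solving $\gamma^{-1}A=\gamma B$, i.e.\ $A=\gamma^2B$ with $\gamma^2\in\F_{q^2}^\ast$, which reduces to $\gamma^{2(q-1)}=a/b$; over $\barFq$ this has $2q-2$ solutions for $q$ odd and $q-1$ for $q$ even, and since $\mathrm{Stab}_{\barFq^\ast}(B)=\F_q^\ast$ (again by uniqueness of the $\F_{q^2}$-line, using $b\mu^{q-1}=b\Rightarrow\mu\in\F_q^\ast$), two solutions give the same pair iff they differ by $\F_q^\ast$, so there are exactly $2$ such pairs for $q$ odd and $1$ for $q$ even. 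To identify the common space $V=A=B$, a one-step Frobenius computation shows that the norm condition $b^{q+1}=\pm1$ forces $B\subseteq\F_{q^4}$, hence $V\subseteq\F_{q^4}$; being a hyperplane, $V=\ker(x\mapsto\Tr_{\F_{q^4}/\F_q}(\alpha x))$ for an $\alpha$ unique up to $\F_q^\ast$, and matching the additive polynomial of a canonical $V$ (of parameter $p$) against $\sum_i\alpha^{q^i}x^{q^i}$ gives $\alpha^{q^2-1}=-1$ and $\alpha^{q-1}=1/p$, whence $\alpha^2\in\F_{q^2}$ and, using $p\neq\pm1$, $\alpha^2\notin\F_q$. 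I expect this last assertion to be the main obstacle, because for case~(1) the diagonal pairs are \emph{not} canonical: there $a/b$ is a nonsquare in $\mu_{q+1}$, so the relevant $\gamma$ lies in $\F_{q^4}\setminus\F_{q^2}$ and $V$ does not contain the subfield $\F_{q^2}$ (the pair $(V,V)$ is therefore not among the canonical representatives). One must run the count over all of $\barFq^\ast$ with careful bookkeeping of the stabilizer $\F_q^\ast$, and then rederive the trace-kernel description through the scaling $V=\gamma B$; it is precisely the hypothesis $ab\neq1$ that guarantees $\alpha^2\in\F_{q^2}\setminus\F_q$ rather than $\alpha^2\in\F_q$.
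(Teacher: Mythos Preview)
Your proposal is correct and follows essentially the same strategy as the paper: reduce via Theorem~\ref{thm:q^3_Weil} to the canonical forms of Theorem~\ref{thm:q^3_full}, determine which scalars $\gamma$ preserve canonical form (namely $\gamma\in\F_{q^2}^\ast$, hence orbits of size $q+1$ on parameters), count the admissible $(a,b)$, and for assertion~(4) solve $\gamma^{2(q-1)}=a/b$ and compute the resulting polynomial explicitly. Your presentation differs only cosmetically: you justify $\gamma\in\F_{q^2}^\ast$ via uniqueness of the $\F_{q^2}$-line in a three-dimensional space rather than by direct coefficient comparison, and you package the orbit count through the invariant $ab\in\mu_{q+1}$ and the square/nonsquare dichotomy, which is a clean way to recover the $(q-1)/2$, $(q+1)/2$, $q$ counts; the paper instead simply divides the raw number of admissible $(a,b)$ by $q+1$. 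Your anticipated obstacle in~(4), that the diagonal pair $(V,V)$ need not be in canonical form when $(a/b)^{(q+1)/2}=-1$, is real and is exactly why the paper computes $B_\gamma(x)$ directly rather than assuming a canonical $V$: one finds $B_\gamma(x)=x^{q^3}+cx^{q^2}+c^{q+1}x^q+c^{q^2+q+1}x$ with $c=\gamma^{q-1}b$, whence $c^2=ab$ (so $ab\neq1$ gives $c^2\neq1$, i.e.\ $\alpha^2\notin\F_q$) and $c^{q+1}=-\gamma^{q^2-1}=\pm1$ (giving $\alpha^2\in\F_{q^2}$), which is precisely your intended resolution carried out.
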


\begin{proof}The stated conditions on $q$ insure that $|A^{-1}\cap B|/(q-1)$ exceeds the Hasse-Weil bound in each case.
Therefore, as in the proof of Theorem~\ref{thm:q^3_Weil} we conclude that after replacing $(A,B)$
with an equivalent pair we have
$A(x)=x^{q^3}+ax^{q^2}-x^q-ax$
and
$B(x)=x^{q^3}+bx^{q^2}-x^q-bx$,
and Theorem~\ref{thm:q^3_full} gives exact conditions that $a$ and $b$ satisfy.

Now the subspaces $\gamma^{-1}A$ and $\gamma B$, which form an equivalent pair to $(A,B)$ for $\gamma\in\barF_q^\ast$,
are the sets of roots of the monic polynomials $A_{\gamma^{-1}}(x)=\gamma^{-q^3}A(\gamma x)$ and $B_{\gamma}(x)=\gamma^{q^3}B(x/\gamma)$.
Comparing coefficients we see that each of the equalities
$\gamma^{-1}A=A$ and $\gamma B=B$
occurs only when $\gamma^{q-1}=1$, that is, when $\gamma\in\F_q^\ast$.
However, $A_{\gamma^{-1}}(x)$ has the admissible form $A_{\gamma^{-1}}(x)=x^{q^3}+a'x^{q^2}-x^q-a'x$ considered above
(which means that it is a multiple of $x^{q^2}-1$) if and only if $\gamma\in\F_{q^2}^{\ast}$,
and so does $B_{\gamma}(x)$.
Consequently, each equivalence class of pairs $(A,B)$ under consideration contains exactly
$q+1$ pairs for which the corresponding polynomials $A(x)$ and $B(x)$ have the required form.
Thus, the number of equivalence classes is obtained after dividing by $q+1$ the number of pairs
$(a,b)$ with $a^{q+1}=b^{q+1}=-1$ and $ab\neq 1$, and possibly the further conditions
given in Theorem~\ref{thm:q^3_full}.
This establishes assertions~(1), (2), and~(3).

A similar coefficient comparison shows that for the pairs $(A,B)$ under consideration
an equivalent pair $(\gamma^{-1}A,\gamma B)$ satisfies $\gamma^{-1}A=\gamma B$ exactly when
$\gamma^{2(q-1)}=a/b$.
Consequently, there are two such pairs equivalent to $(A,B)$ when $q$ is odd,
and only one when $q$ is even, as claimed in assertion~(4).
Furthermore, taking $\gamma^{2(q-1)}=a/b$, whence $\gamma^{q^2-1}=(a/b)^{(q+1)/2}=\pm 1$, we obtain
\begin{align*}
A_{\gamma^{-1}}(x)
=B_{\gamma}(x)
&=x^{q^3}+\gamma^{q^3-q^2}bx^{q^2}-\gamma^{q^3-q}x^q+\gamma^{q^3-1}bx
\\&=x^{q^3}+\gamma^{q-1}bx^{q^2}-\gamma^{q^2-1}(x^q+\gamma^{q-1}bx)
\\&=x^{q^3}+cx^{q^2}+c^{q+1}(x^q+cx)
\\&=x^{q^3}+cx^{q^2}+c^{q+1}x^q+c^{q^2+q+1}x,
\end{align*}
having set $c=\gamma^{q-1}b$ and noted that $c^{q+1}=-\gamma^{q^2-1}=\pm 1$.
For $(a,b)$ ranging over all pairs such that
$a^{q+1}=b^{q+1}=-1$ and $ab\neq 1$, and with $\gamma$ chosen as above,
$c$ takes all the values such that $c^{2(q+1)}=1$ and $c^2\neq 1$.
The description of $A$ given in assertion~(4) follows at once by writing $c=\alpha^{-(q-1)}$.
\end{proof}

\begin{rem}
The restrictions on $q$ in assertions~(2) and~(3) of Theorem~\ref{thm:q^3_count} cannot be relaxed.
For example, a computer calculation shows that $\F_{5^4}$ contains $31\cdot 8$ pairs $(A,B)$ of three-dimensional $\F_5$-subspaces
such that $|A^{-1}\cap B|=2q^2-2q=40$, rather than $(q^2+q+1)\cdot(q+1)/2=31\cdot 3$
as Theorem~\ref{thm:q^3_count} would predict.
For those in excess, $A^{-1}\cap B$ yields an irreducible cubic in the projective plane $\Proj B$.
\end{rem}

\begin{rem}\label{rem:caps}
In case of assertion~(1) of Theorem~\ref{thm:q^3_count}, where equality in Csajb\'{o}k's bound is attained,
an alternate approach is available and described in~\cite{Mat:caps}.
It relies on facts from finite geometries to bypass the arguments of this and the previous section,
and hence Theorem~\ref{thm:form}, on which they ultimately depend.
Briefly, as a special case of a more general result it is shown in~\cite{Mat:caps} that the image of
$A^{-1}\cap B$ in $\Proj B$ is an arc,
for three-dimensional $\F_q$-subspaces $A,B$ of $\barFq$,
unless $(A^{-1}\cap B)\cup\{0\}$
contains a one-dimensional $\F_{q^2}$-subspace of $\barFq$.
Because it is known that an arc in $\Proj^2(\F_q)$ has at most $2q+1$ points if $q>3$,
when equality $|A^{-1}\cap B|/(q-1)=2q+2$ holds in Csajb\'{o}k's bound
we conclude that $(A^{-1}\cap B)\cup\{0\}$
contains a one-dimensional $\F_{q^2}$-space, and then our Theorem~\ref{thm:q^3_full} applies.
If $q$ is odd (and larger than three), at this point one can also
deduce that the image of $A^{-1}\cap B$ in $\Proj B$ is the union of a line and a conic
without using Theorem~\ref{thm:q^3_full}, appealing instead
to the classical result of B.~Segre that an an arc with $q+1$ points in a projective plane
is a conic (for $q$ odd), see~\cite[Theorem~5]{Mat:caps}.
\end{rem}

\begin{rem}\label{rem:q^2}
In contrast with the three-dimensional case considered in Theorem~\ref{thm:q^3_count}, for a fixed prime power $q$
there are infinitely many equivalence classes of pairs $(A,B)$ of two-dimensional $\F_q$-subspaces
which attain equality in Csajb\'{o}k's bound $|A^{-1}\cap B|\le 2q-2$.
This follows at once from their description which we gave at the end of Section~\ref{sec:meet}.
\end{rem}

\begin{cor}\label{cor:q^3}
Assume $q>5$, and set $P(x)=x^{q^3}+x^{q^2}+x^q+x$.
If $A$ and $B$ are $\F_q$-subspaces of $\barFq$, of size $q^3$, such that
\[
q^2-1+\lfloor 2q^{1/2}\rfloor\cdot(q-1)
<|A^{-1}\cap B|<q^3-1,
\]
then $A$ and $B$ are the sets of roots of $P(\alpha\gamma^{-1}x)$ and $P(\alpha\gamma x)$,
respectively, for some $\alpha,\gamma\in\barFq^{\ast}$
with $\alpha^2\in\F_{q^2}\setminus\F_q$.
\end{cor}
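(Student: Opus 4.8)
The plan is to recognise the two displayed inequalities as precisely the hypotheses of Theorem~\ref{thm:q^3_Weil}, and then to convert the structural conclusion of the classification Theorem~\ref{thm:q^3_count} into the compact description in terms of $P$. First I would rewrite the bounds. Since
$q^2-1+\lfloor 2q^{1/2}\rfloor\cdot(q-1)=(q-1)\bigl(q+1+\lfloor 2q^{1/2}\rfloor\bigr)$,
the lower bound says exactly that $|A^{-1}\cap B|/(q-1)>q+1+\lfloor 2q^{1/2}\rfloor$; and because $|A^{-1}\cap B|$ reaches its maximal value $q^3-1$ if and only if $A^{-1}\subseteq B$, the upper bound $|A^{-1}\cap B|<q^3-1$ is merely a restatement of $A^{-1}\not\subseteq B$. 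Thus both hypotheses of Theorem~\ref{thm:q^3_Weil} are in force.

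Next I would apply that theorem to conclude that $|A^{-1}\cap B|/(q-1)$ equals $2q+2$ or $2q$ for $q$ odd and equals $2q+1$ for $q$ even, so that $(A,B)$ falls under assertion~(1), (2), or~(3) of Theorem~\ref{thm:q^3_count}; the assumption $q>5$ is what guarantees that each of these values strictly exceeds the Hasse--Weil bound, so that the classification applies without exception. Assertion~(4) of that theorem then tells us that $(A,B)$ is equivalent to a pair with $A=B$ equal to the kernel of $x\mapsto\Tr_{\F_{q^4}/\F_q}(\alpha x)$, for some $\alpha\in\F_{q^4}$ with $\alpha^2\in\F_{q^2}\setminus\F_q$.

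It remains only to translate this into the polynomial $P$ and to undo the equivalence. Writing $V$ for the set of roots of $P(x)=x+x^q+x^{q^2}+x^{q^3}=\Tr_{\F_{q^4}/\F_q}(x)$, that is, for the space of trace-zero elements, the kernel of $x\mapsto\Tr(\alpha x)$ is exactly $\alpha^{-1}V$. Hence $(A,B)$ is equivalent to $(\alpha^{-1}V,\alpha^{-1}V)$, which by the definition of our equivalence means that $A=\gamma\alpha^{-1}V$ and $B=\gamma^{-1}\alpha^{-1}V$ for a suitable $\gamma\in\barFq^{\ast}$. Finally I would unwind these descriptions: $x\in\gamma\alpha^{-1}V$ holds if and only if $\alpha\gamma^{-1}x\in V$, i.e.\ if and only if $P(\alpha\gamma^{-1}x)=0$, so $A$ is the set of roots of $P(\alpha\gamma^{-1}x)$; symmetrically $B$ is the set of roots of $P(\alpha\gamma x)$, which is the claimed form, with $\alpha^2\in\F_{q^2}\setminus\F_q$ inherited directly from Theorem~\ref{thm:q^3_count}.

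Because all of the substantial content --- the reducibility analysis of $E$, the Hasse--Weil estimate, and the precise determination of the exceptional pairs --- is already contained in Theorems~\ref{thm:q^3_Weil} and~\ref{thm:q^3_count}, I do not expect any genuine analytic difficulty. The one place demanding care is the bookkeeping of scalars under the equivalence $(A,B)\sim(\gamma^{-1}A,\gamma B)$: one must keep straight which of $\gamma$ and $\gamma^{-1}$ acts on $A$ and which on $B$, and verify that composing $P$ with the correct scalar reproduces $\gamma\alpha^{-1}V$ and $\gamma^{-1}\alpha^{-1}V$ rather than their images under a reciprocal scaling.
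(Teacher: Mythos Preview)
Your proposal is correct and follows essentially the same route as the paper's own proof: invoke Theorem~\ref{thm:q^3_Weil} to place $(A,B)$ among the pairs classified in Theorem~\ref{thm:q^3_count}, use assertion~(4) of the latter to pass to an equivalent pair with $\gamma^{-1}A=\gamma B$ equal to the kernel of $x\mapsto\Tr_{\F_{q^4}/\F_q}(\alpha x)$, and then unwind the equivalence. Your argument is in fact slightly more explicit than the paper's, spelling out the identification of the lower bound with the Hasse--Weil threshold and the passage from the trace description to $P$, and you handle the scalar bookkeeping correctly.
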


\begin{proof}
According to Theorem~\ref{thm:q^3_Weil} the pair $(A,B)$ is one of those described in Theorem~\ref{thm:q^3_count}.
According to assertion~(4) of the latter, some equivalent pair $(\gamma^{-1}A,\gamma B)$ satisfies
$\gamma^{-1}A=\gamma B$, and that subspace equals the set of roots of $P(\alpha x)$,
for some $\alpha\in\barFq^{\ast}$ with $\alpha^2\in\F_{q^2}\setminus\F_q$.
\end{proof}

The explicit description of the spaces $A$ and $B$ given in Corollary~\ref{cor:q^3} allows one
to decide whether and how many of them can be found inside a given finite field.
For example, in $\F_{q^4}$ there are exactly $2q(q^2+q+1)$ such pairs $(A,B)$ for $q$ odd,
and $q(q^2+q+1)$ for $q$ even, because $\gamma\in\F_{q^4}^{\ast}$ in this case.
Those among them with $A=B$ are in number of $2q$ and $q$, respectively,
and for $q$ odd they match those described in~\cite[Propositions~4.4 and~4.5]{Csajbok:inverse-closed}.

Another example is the following improvement of Csajb\'{o}k's bound for three-dimensional subspaces of finite fields which do not contain $\F_{q^4}$.

\begin{cor}
Consider the finite field $\F_{q^e}$, where $q>5$ and $e$ is not a multiple of four.
If $A$ and $B$ are $\F_q$-subspaces of $\F_{q^e}$ with size $q^3$, and $A^{-1}\not\subseteq B$, then
$
|A^{-1}\cap B|\le q^2-1+\lfloor 2q^{1/2}\rfloor\cdot(q-1).
$
\end{cor}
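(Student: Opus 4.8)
The plan is to argue by contradiction, leveraging the explicit classification of near-extremal pairs supplied by Corollary~\ref{cor:q^3}. First I would rewrite the target inequality in the form $|A^{-1}\cap B|/(q-1)\le q+1+\lfloor 2q^{1/2}\rfloor$, which is exactly $|A^{-1}\cap B|\le q^2-1+\lfloor 2q^{1/2}\rfloor\cdot(q-1)$. Suppose this fails. Since $A^{-1}\not\subseteq B$ forces $|A^{-1}\cap B|<q^3-1$, the value $|A^{-1}\cap B|$ then lies in precisely the range $q^2-1+\lfloor 2q^{1/2}\rfloor\cdot(q-1)<|A^{-1}\cap B|<q^3-1$ covered by Corollary~\ref{cor:q^3}, whose hypothesis $q>5$ is available. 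Hence $A$ and $B$ must be the sets of roots of $P(\alpha\gamma^{-1}x)$ and $P(\alpha\gamma x)$ for suitable $\alpha,\gamma\in\barFq^\ast$ with $\alpha^2\in\F_{q^2}\setminus\F_q$.

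Next I would reinterpret this conclusion field-theoretically. Since $P(x)=x^{q^3}+x^{q^2}+x^q+x$ restricts on $\F_{q^4}$ to $\Tr_{\F_{q^4}/\F_q}$, and $P$ is separable of degree $q^3$, its set of roots $K$ is exactly the trace-zero hyperplane, a three-dimensional $\F_q$-subspace of $\F_{q^4}$. The roots of $P(\alpha\gamma^{-1}x)$ are then $A=\gamma\alpha^{-1}K$; that is, $A$ is a single $\barFq^\ast$-scalar multiple of the fixed subspace $K\subseteq\F_{q^4}$.

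The decisive step, which I expect to be the main obstacle, is to show that a scalar multiple of $K$ cannot be contained in $\F_{q^e}$ unless $4\mid e$. The key observation is that the scalar cancels in quotients: for nonzero $v,w\in A$ the ratio $v/w$ equals a ratio $k_1/k_2$ of two elements of $K$, and so lies in $\F_{q^4}$; but $A\subseteq\F_{q^e}$ also places $v/w$ in $\F_{q^e}$, whence $v/w\in\F_{q^4}\cap\F_{q^e}=\F_{q^{\gcd(4,e)}}$. It then remains to check that these ratios generate $\F_{q^4}$: fixing a nonzero $k_0\in K$, the set $k_0^{-1}K$ is again a three-dimensional $\F_q$-subspace of $\F_{q^4}$, now containing $1$, and being of dimension three it is contained in no proper subfield of $\F_{q^4}$, hence it generates $\F_{q^4}$ over $\F_q$. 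Consequently $\F_{q^4}\subseteq\F_{q^{\gcd(4,e)}}$, forcing $4\mid e$ and contradicting the hypothesis that $e$ is not a multiple of four. This contradiction establishes the bound. The only genuinely delicate point is the final generation argument, but it reduces to the elementary fact that an $\F_q$-subspace whose dimension exceeds that of every proper subfield must generate the whole field.
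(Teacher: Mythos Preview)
Your argument is correct and follows essentially the same route as the paper: invoke Corollary~\ref{cor:q^3}, then show via quotients that the ambient field must contain $\F_{q^4}$. The only cosmetic difference is that the paper takes quotients of elements of $A^{-1}\cap B$ (using that this set properly contains a one-dimensional $\F_{q^2}$-subspace) rather than of $A$ itself, but your dimension argument for $k_0^{-1}K$ is an equally valid way to see that the quotients generate $\F_{q^4}$.
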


\begin{proof}
Consider subspaces $A$ and $B$ of $\barFq$ as in Corollary~\ref{cor:q^3}.
When $\gamma=1$ they are both contained in $\F_{q^4}$,
and  $(A^{-1}\cap B)\cup\{0\}$ properly contains the one-dimensional
$\F_{q^2}$-subspace consisting of the roots of $(\alpha x)^{q^2}+\alpha x$.
Therefore, the subfield generated by all the quotients of
pairs of elements of $A^{-1}\cap B$ properly contains
$\F_{q^2}$, and hence equals $\F_{q^4}$.
This last statement carries over to the case of arbitrary $\gamma$.
Consequently, the subfield of $\barFq$ generated by $A^{-1}\cap B$ contains $\F_{q^4}$,
and hence $A$ and $B$ cannot be both contained in $\F_{q^e}$.
\end{proof}

\section{Proof of Theorem~\ref{thm:form}}\label{sec:form}

Recall that a polynomial is called {\em multilinear} if it has degree at most one in each indeterminate.
The following property of the polynomial $E$, which follows from the definition
and is inherited by factors, will be crucial in most of our arguments.

\begin{property}\label{property}
Any factor of $E$ has degree at most two in
each indeterminate $x_i$, and joint degree at most three in each
pair of indeterminates $x_i$ and $x_j$.
\end{property}

Thus, for example, $E$ cannot have any term divisible by $x_1^2x_2^2$.
As another example, if $E=FG$ and $F$ has degree two in some $x_i$,
then $G$ cannot involve $x_i$.

\subsection{Linear factors of $E$}
We first prove the conclusions of Theorem~\ref{thm:form} under the additional assumption
that $E$ has a non-monomial linear factor.

\begin{lemma}\label{lemma:linear}
Under the hypotheses of Theorem~\ref{thm:form},
any non-monomial linear factor of $E$ is a linear combination of exactly two
indeterminates.
\end{lemma}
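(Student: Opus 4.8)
The plan is to analyze what form a non-monomial linear factor $L$ of $E$ can take, and to rule out the possibility that $L$ genuinely involves three or more of the indeterminates. Write $L=\sum_{k\in S}\lambda_k x_k$ where $S\subseteq\{1,\dots,n\}$ is the support, all $\lambda_k\neq 0$, and $|S|\ge 2$; the goal is to show $|S|=2$. Recall the explicit shape
\[
E=x_1\cdots x_n+\Bigl(\sum_{i=1}^n a_i\prod_{k\neq i}x_k\Bigr)\cdot\Bigl(\sum_{j=1}^n b_j x_j\Bigr),
\]
where I have cleared the denominators in the definition; the first summand is the squarefree monomial of degree $n$, and the product term is a polynomial each of whose monomials omits exactly one variable from the first factor while the second factor supplies one variable back (possibly the same one, producing a square). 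The key structural input is Property~\ref{property}: any factor, and hence $E$ itself, has degree at most two in each $x_i$ and joint degree at most three in each pair $x_i,x_j$.

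First I would write $E=L\cdot G$ for some cofactor $G$ and examine the consequences of the degree constraints. Since $L$ is linear and genuinely contains $x_k$ for each $k\in S$, the cofactor $G$ can have degree at most one in each such $x_k$ (otherwise $E=LG$ would have degree three in that variable, violating Property~\ref{property}); more importantly, for any two distinct indices $i,j\in S$, the joint-degree-three restriction forces $G$ to have joint degree at most one in the pair $(x_i,x_j)$ once the contribution of $L$ is accounted for. The cleanest route is to substitute: for a variable $x_k$ with $k\in S$, setting $L=0$, i.e. specializing $x_k=-\lambda_k^{-1}\sum_{m\in S\setminus\{k\}}\lambda_m x_m$, must annihilate $E$ identically. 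I would exploit this by isolating the top-degree monomial $x_1\cdots x_n$ and tracking which monomials of $E$ can possibly cancel against it under such a linear substitution. The monomial $x_1\cdots x_n$ is squarefree of full degree $n$; after substituting for $x_k$, it becomes $-\lambda_k^{-1}\bigl(\sum_{m}\lambda_m x_m\bigr)\prod_{\ell\neq k}x_\ell$, whose expansion contains, among others, the term $-\lambda_k^{-1}\lambda_i x_i\prod_{\ell\neq k}x_\ell$ for each $i\in S\setminus\{k\}$, carrying a squared variable $x_i^2$.

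The crux of the argument, and the step I expect to be the main obstacle, is the matching of these $x_i^2$-terms against contributions from the product part of $E$. In the product $\bigl(\sum_i a_i\prod_{k\neq i}x_k\bigr)\bigl(\sum_j b_j x_j\bigr)$, a squared variable $x_i^2$ arises only from pairing the summand $a_i\prod_{k\neq i}x_k$ (which omits $x_i$) with the term $b_i x_i$ twice over, or more precisely from those cross terms where the omitted index matches the reintroduced index. Carefully bookkeeping which monomials survive the $x_k=0$ substitution and demanding that every coefficient vanish yields a rigid system of relations among the $a_i,b_j,\lambda_k$; the heart of the matter is that if $|S|\ge 3$ then one obtains an overdetermined set of equations forcing some $\lambda_k$ to vanish, contradicting $k\in S$. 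I would organize this by fixing three indices $i,j,k\in S$ and comparing the coefficients of the three monomials of the form $x_i^2\prod_{\ell\neq i,k}x_\ell$ (and its cyclic analogues) that the substitution produces; showing these cannot simultaneously cancel is where the genuine computation lives. Once that incompatibility is established, $|S|=2$ follows, which is exactly the assertion of Lemma~\ref{lemma:linear}. I would keep the final writeup anchored to the concrete coefficient comparisons rather than to abstract ideal-theoretic language, since the degree bounds of Property~\ref{property} make the combinatorics of surviving monomials entirely explicit.
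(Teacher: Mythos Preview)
Your substitution strategy is a different route from the paper's, and the place where you write ``showing these cannot simultaneously cancel is where the genuine computation lives'' is exactly where a real gap remains. After replacing $x_k$ by $-\lambda_k^{-1}\sum_{m}\lambda_m x_m$, the coefficient of $x_i^2\prod_{\ell\neq i,k}x_\ell$ in the result receives contributions not only from the squarefree monomial $x_1\cdots x_n$ and from the term $a_k b_i\,x_i^2\prod_{\ell\neq k,i}x_\ell$ of $E$, but also from every term $a_p b_i\,x_i^2\prod_{\ell\neq p,i}x_\ell$ with $p\neq i,k$ (via the substituted $x_k$ inside the product) and, more awkwardly, from every term $a_p b_k\,x_k^2\prod_{\ell\neq p,k}x_\ell$ (via the \emph{square} of the substituted expression). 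The resulting equation is nonlinear in the $\lambda$'s and mixes all the $a_p$ with $p\in S$; it is by no means evident that varying $i,j,k$ within $S$ produces an inconsistency without a substantial further argument, and you have not supplied one. (Incidentally, your remark that the cofactor has joint degree at most one in each pair $(x_i,x_j)$ with $i,j\in S$ is off by one: a linear form has joint degree one, not two, so Property~\ref{property} only bounds the cofactor's joint degree by two.)

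The paper avoids all of this by first showing that if the linear factor $G$ involves at least three indeterminates then its cofactor $F=E/G$ must be \emph{multilinear}. Each monomial of $F$ has degree $n-1$ and, by Property~\ref{property}, omits at most two indeterminates; hence it shares some $x_i$ with $G$. Then $F$ has degree exactly one in $x_i$ and joint degree at most two in $(x_i,x_j)$ for every other $j$, so that monomial has every exponent equal to one. Writing $F=x_1\cdots x_n\sum_i f_i/x_i$ and $G=\sum_j g_j x_j$, comparison of the non-multilinear terms in $E=FG$ gives $f_ig_j=a_ib_j$ for all $i\neq j$; with three nonzero $g_j$ this forces $(f_1,\ldots,f_n)$ to be proportional to $(a_1,\ldots,a_n)$, an immediate contradiction with the shape of $E$. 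This structural reduction to a multilinear cofactor is the idea your plan is missing, and it sidesteps the substitution bookkeeping entirely.
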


\begin{proof}
Let $E=FG$, with $G$ a non-monomial linear factor.
Because $F$ is homogeneous of degree $n-1$ in $n$ indeterminates,
according to Property~\ref{property} each term of $F$ misses at most two indeterminates.

Now suppose for a contradiction that $G$ involves at least three indeterminates.
Then an arbitrary monomial of $F$ must share at least one indeterminate with $G$, say $x_i$.
But then $F$ has degree exactly one in that $x_i$, and joint degree at most two in
$x_i$ and $x_j$, for each other indeterminate $x_j$.
Consequently, that arbitrary term of $F$ has degree at most one in each
indeterminate, and hence $F$ is multilinear.

Writing
$F=x_1\cdots x_n\cdot\sum_{i=1}^n f_i/x_i$
and $G=\sum_{j=1}^n g_jx_j$
and comparing coefficients of the non-multilinear terms of $E$ on both sides of the equation $FG=E$
we find
$f_ig_j=a_ib_j$ for $i\neq j$.
Now our assumption that at least three of the coefficients $g_j$ are nonzero
implies that the $n$-tuples $(f_1,\ldots,f_n)$ and $(a_1,\ldots,a_n)$ are proportional.
In fact, our equations imply $(f_ia_j-f_ja_i)g_kb_k=0$ for any distinct $i,j,k$.
If $g_k\neq 0$ for some $k$, then $b_k\neq 0$, otherwise
$f_i=a_ib_k/g_k=0$ and $f_j=a_jb_k/g_k=0$, which is impossible.
Hence if $g_k\neq 0$ for some $k$, then $f_ia_j=f_ja_i$ for any $i,j\neq k$.
Consequently, if $g_k\neq 0$ for at least three values of $k$, then
then $f_ia_j=f_ja_i$ for any $i,j$, and hence
the $n$-tuples $(f_1,\ldots,f_n)$ and $(a_1,\ldots,a_n)$ are proportional.

We conclude that $F$ is a scalar multiple of
$x_1\cdots x_n\cdot \sum_{i=1}^{n}a_i/x_i$,
in plain contradiction with the definition of $E$ and the fact that $FG=E$.
\end{proof}

\begin{lemma}\label{lemma:exceptions}
Under the hypotheses of Theorem~\ref{thm:form}, if $E$
has $x_1+x_2$ as a factor, then either
\begin{multline*}
E/(x_3\cdots x_n)=
x_1x_2\cdot\bigl(
1+(a/x_1+(a+c)/x_2)\cdot(bx_1+(b-c^{-1})x_2)
\bigr)
\\
=
(x_1+x_2)\bigl((a+c)bx_1+a(b-c^{-1})x_2\bigr)
\end{multline*}
for some $a,b,c\in\barFq$ with $c\neq 0$, or
\begin{multline*}
E/(x_4\cdots x_n)=
x_1x_2x_3\cdot\bigl(
1+(a/x_1+a/x_2-1/x_3)\cdot(bx_1+bx_2-x_3)
\bigr)
\\
=
(x_1+x_2)(abx_2x_3+abx_1x_3+bx_1x_2-ax_3^2),
\end{multline*}
up to permuting the indeterminates $x_3,\ldots,x_n$.
\end{lemma}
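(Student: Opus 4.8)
The plan is to determine all ways $E$ can factor as $(x_1+x_2)\cdot G$, where $G$ is homogeneous of degree $n-1$, by exploiting Property~\ref{property} to show that $G$ is forced into a very rigid shape, and then matching coefficients. First I would write $G=x_1\cdots x_n\cdot\sum_{i=1}^n g_i/x_i + (\text{quadratic corrections})$; more precisely, since $E$ has degree two in at most a couple of indeterminates, I expect $G$ to be multilinear in $x_3,\ldots,x_n$ and to carry all of the quadratic behaviour in $x_1,x_2$. The key structural observation is that the monomials of $E$ divisible by $x_3\cdots x_n$ are exactly those coming from the ``diagonal'' part of the product $(\sum a_i/x_i)(\sum b_j x_j)$, so the coefficients $g_i$ for $i\ge 3$ must reproduce the $a_i,b_i$ data for those variables while being constrained by divisibility by $x_1+x_2$.

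\textbf{Reduction to three variables.}
I would first argue that, after dividing out the monomial $x_3\cdots x_n$ from the appropriate part, the variables $x_i$ with $i\ge 3$ that genuinely appear must do so in a controlled way. The cleanest route is to substitute $x_2=-x_1$ (so that the factor $x_1+x_2$ vanishes) and demand that $E$ vanish identically; this converts the factorisation hypothesis into a system of equations among the $a_i,b_j$. Setting $x_2=-x_1$ in
\[
E=x_1\cdots x_n\Bigl(1+\bigl(\sum_i a_i/x_i\bigr)\bigl(\sum_j b_j x_j\bigr)\Bigr)
\]
and collecting terms by their dependence on $x_3,\ldots,x_n$, I expect to read off that for each $k\ge 3$ the ``cross'' contributions involving $x_k$ must cancel. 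This should force either $a_k=b_k=0$ for all but at most one such $k$ (giving the two-variable case, the first displayed factorisation), or exactly one extra variable $x_3$ survives with a tightly prescribed relation among $a_1,a_2,a_3,b_1,b_2,b_3$ (giving the second, three-variable factorisation).

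\textbf{Coefficient matching in the surviving variables.}
Once the relevant variables are isolated, the remaining work is to solve the finite system. In the two-variable case I would parametrise $a_1=a$, and write $a_2=a+c$, $b_1=b$, $b_2=b-c^{-1}$; the condition that $x_1+x_2$ divides $E/(x_3\cdots x_n)=x_1x_2\bigl(1+(a_1/x_1+a_2/x_2)(b_1x_1+b_2x_2)\bigr)$ is precisely a single relation among $a_1,a_2,b_1,b_2$, and one checks directly that this relation is equivalent to $a_1 b_2 = a_2 b_1 - 1$, i.e.\ $a(b-c^{-1})=(a+c)b-1$, which holds identically in this parametrisation with $c\ne0$. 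Evaluating the cofactor then yields the stated $(a+c)bx_1+a(b-c^{-1})x_2$. In the three-variable case the symmetry $a_1=a_2=a$, $b_1=b_2=b$, $a_3=-1$, $b_3=-1$ emerges from the cancellation conditions, and a direct expansion confirms the displayed quadratic cofactor $abx_2x_3+abx_1x_3+bx_1x_2-ax_3^2$.

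\textbf{Main obstacle.}
The hard part will be the bookkeeping that rules out \emph{three or more} genuinely-occurring ``extra'' variables $x_k$ with $k\ge3$: one must show that the divisibility by $x_1+x_2$, together with Property~\ref{property}'s degree constraints, cannot be satisfied once too many variables carry nonzero $a_k$ or $b_k$. I expect this to reduce, after the substitution $x_2=-x_1$, to showing that a certain bilinear-type expression in the $x_k$ ($k\ge3$) can vanish identically only in the two listed configurations; the delicate point is that the quadratic term $-a x_3^2$ in the second case is the \emph{only} permitted squared variable beyond $x_1,x_2$, so admitting a second such square would violate the joint-degree bound. Handling the interplay between these degree restrictions and the linear condition $x_2=-x_1$ carefully is where the real care is needed, but once the configuration is pinned down the final identities are routine verifications.
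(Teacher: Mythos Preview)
Your substitution approach is sound and genuinely different from the paper's. The paper never specialises; instead it uses Property~\ref{property} to argue that the cofactor $F=E/(x_1+x_2)$ must have the shape
\[
F=x_1\cdots x_n\cdot\sum_{i=1}^n f_i/x_i+x_3\cdots x_n\cdot\sum_{j=3}^n f'_jx_j,
\]
observes from this that $E=FG$ has no term $x_1\cdots x_n\cdot x_j/x_i$ with distinct $i,j>2$ (whence $a_ib_j=0$ for such pairs), reduces to at most one surviving index beyond $2$, and then matches all seven coefficients. Your route is more direct: setting $x_2=-x_1$ and reading off the vanishing of each monomial in $x_1,x_3,\ldots,x_n$ immediately gives $(a_1-a_2)b_k=0$, $(b_1-b_2)a_k=0$, and $a_ib_j=0$ for distinct $i,j\ge3$, together with the scalar relation $1+(a_1-a_2)(b_1-b_2)+\sum_{k\ge3}a_kb_k=0$. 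These already force the two configurations with almost no work, so your ``main obstacle'' is far lighter than you anticipate: the cross conditions $a_ib_j=0$ alone leave at most one index $k\ge3$ with $a_kb_k\ne0$, and the scalar relation then splits the two cases. You do not actually need Property~\ref{property} here, which is a small conceptual gain over the paper.

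Two small slips. First, the two-variable divisibility condition is $(a_1-a_2)(b_1-b_2)=-1$, not $a_1b_2=a_2b_1-1$; your parametrisation $a_2=a_1+c$, $b_2=b_1-c^{-1}$ satisfies the former, not the latter. Second, your opening expectation that the cofactor is multilinear in $x_3,\ldots,x_n$ is contradicted by the $-ax_3^2$ term in the three-variable case; fortunately your substitution method does not depend on that expectation.
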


\begin{proof}
Any non-multilinear factor of $F$ cannot involve either $x_1$ or $x_2$, and hence
\[
F=x_1\cdots x_n\cdot\sum_{i=1}^n f_i/x_i+x_3\cdots x_n\cdot\sum_{j=3}^n f'_jx_j.
\]
Then the product $E=FG$ has no term of the form $x_1\cdots x_n\cdot x_j/x_i$,
and so we have $a_ib_j=0$ whenever $i,j>2$ and $i\neq j$.
This implies $a_i=b_i=0$ for all indices $i>2$ except possibly one,
and possibly after permuting the indeterminates $x_3,\ldots,x_n$ we may assume that
$a_i=b_i=0$ for $i>3$.
Therefore, each of the indeterminates $x_4,\ldots,x_n$ appears in each monomial of $E$
with exponent exactly one, and hence $f_i=f'_i=0$ for $i>3$.

Thus we have $F=(f_1x_2x_3+f_2x_1x_3+f_3x_1x_2+f'_3x_3^2)\cdot x_4\cdots x_n$, and
comparing coefficients of each term on both sides of the equation  $FG=E$ we find
\begin{gather*}
f_2=a_2b_1,
\quad
f_3=a_3b_1,
\quad
f_1=a_1b_2,
\quad
f_3=a_3b_2,
\\
f_1+f_2=1+a_1b_1+a_2b_2+a_3b_3,
\quad
f'_3=a_2b_3,
\quad
f'_3=a_1b_3.
\end{gather*}
Substituting the first and third equation of the set into the fifth one turns that into
$(a_1-a_2)(b_1-b_2)+a_3b_3+1=0$.
Hence if $a_3b_3=0$ then $a_1\neq a_2$ and $b_1\neq b_2$, whence $f_3=f'_3=0$,
and so
\[
F=\bigl(
(a+c)bx_1+a(b-c^{-1})x_2
\bigr)\cdot x_3\cdots x_n,
\]
where we have set $a:=a_1$, $b:=b_1$, and $c:=a_2-a_1=(b_1-b_2)^{-1}$.
However, if $a_3b_3\neq 0$,
then the displayed equations yield $a_1=a_2$, $b_1=b_2$, and $b_3=-1/a_3$,
whence
$f_1=f_2=a_1b_1$,
$f_3=a_3b_1$,
$f'_3=-a_1/a_3$,
that is,
\[
F=(abx_2x_3+abx_1x_3+bx_1x_2-ax_3^2)\cdot x_4\cdots x_n
\]
after setting
$a:=a_1/a_3$ and $b:=a_3b_1$.
\end{proof}

\subsection{General plan of the proof}

Because of Lemmas~\ref{lemma:linear} and~\ref{lemma:exceptions},
in order to prove Theorem~\ref{thm:form} it remains to show
that if $E=FG$ is any factorisation into non-monomial factors,
then either $F$ or $G$ is the product of a monomial and a linear
factor.
This will be our goal from now on.
Hence we may set the following assumptions, which will make the subsequent
arguments run smoother.

\begin{assumptions}\label{ass}
Let the polynomial $E$ of Theorem~\ref{thm:form} be the product of two polynomials $F$ and $G$, of degrees $r>1$ and $n-r>1$, neither of which is a monomial.
Assume also that $G$ has no non-trivial monomial factor.
\end{assumptions}

Our assumptions on the degrees are allowed because otherwise either $F$ or $G$ would be the desired non-monomial linear factor.
That $G$ has no non-trivial monomial factor can always be achieved
by moving any monomial factor from $G$ to $F$.

\subsection{The case where either $F$ or $G$ is not multilinear}
If $F$ is multilinear but $G$ is not, then we may interchange the roles of $F$ and $G$,
after moving any monomial factor so that Assumptions~\ref{ass} remain satisfied.
Hence assume that $F$ is not multilinear.

Possibly after renumbering the indeterminates, we may assume that $F$ has a term $x_1^2x_2\cdots x_{r-1}$.
Then $G$ is a multilinear polynomial of degree $n-r$ in the
remaining $n-r+1$ indeterminates $x_r,\ldots,x_n$,
otherwise Property~\ref{property} would be contradicted.
Because $G$ has no non-trivial monomial factor according to Assumptions~\ref{ass}, each term
$x_r\cdots x_n/x_i$ with $i\ge r$ appears in $G$ with a nonzero
coefficient.
In turn, Property~\ref{property} implies that any non-multilinear term of $F$ can only involve the indeterminates
$x_1,\ldots,x_{r-1}$.

If some (multilinear) term of $F$ involved at least two of the indeterminates $x_r,\ldots,x_n$,
then because $n-r>1$ that term would share at least two indeterminates with
some term of $G$, and this would contradict Property~\ref{property}.
Therefore, no term of $F$ involves more than one indeterminate from $x_r,\ldots,x_n$.
We have seen earlier that any term of $F$ which involves such indeterminate must be multilinear,
and so altogether $F$ can be written in the form
\[
F=x_1\cdots x_{r-1}\cdot\sum_{i=1}^{n}f_i\,x_i,
\]
which provides us with the desired linear factor.

\subsection{The case where $F$ and $G$ are both multilinear}

Now we may suppose that both $F$ and $G$ are multilinear polynomials.
Because of Property~\ref{property} each term of $G$ can share at most one
indeterminate with each term of $F$.
Any two distinct terms of $F$
must involve together exactly $r+1$ or $r+2$ indeterminates.
In fact, if they did involve more, then
any term of $G$ would involve at least three of them,
and hence it would share at least two indeterminates
with at least one of the two terms of $F$ under consideration,
contradicting Property~\ref{property}.
We deal with those two cases separately.

\subsection{The subcase $r+1$}
Suppose first that $F$ has at least two terms which together
involve $r+1$ indeterminates.
After renumbering the
indeterminates we may assume that two such terms are
$f_r\,x_1\cdots x_r+f_{r+1}\,x_1\cdots x_{r-1}\cdot x_{r+1}$,
with $f_r,f_{r+1}\neq 0$.
Because of Property~\ref{property} each term of $G$ can share at most one indeterminate with each of them, and hence
\begin{equation}\label{eq:G}
G=
\sum_{i=1}^{r+1} g_i\,x_i\cdot x_{r+2}\cdots x_n
+\sum_{j=r+2}^n g'_j\,x_r\cdots x_n/x_j
\end{equation}
for some scalars $g_i,g'_i$.
An alternative choice of notation
would be restricting the former summation range
to $i<r$ and extending the latter to $i\ge r$, provided we set
$g'_r=g_{r+1}$ and $g'_{r+1}=g_r$.
We conveniently allow this double notation in what follows.

Comparing the  coefficients of corresponding monomials on both sides of the
equality $FG=E$ we find, in particular,
\begin{equation}\label{eq:g}
f_rg_i=a_{r+1}b_i
\quad\text{and}\quad
f_{r+1}g_i=a_rb_i
\quad\text{for $i<r$},
\end{equation}
\begin{equation}\label{eq:g'}
f_rg'_j=a_jb_r
\quad\text{and}\quad
f_{r+1}g'_j=a_jb_{r+1}
\quad\text{for $j>r+1$.}
\end{equation}
In fact, because $F$ is multilinear any term in the product $FG$ where $x_i$ appears with exponent
two, for some $i<r$, can only arise in one way,
as the product of the term of $G$ with coefficient $g_i$ and a
uniquely determined term of $F$, necessarily one of the two considered above.
A similar argument applies to any term in the product $FG$ which misses the
indeterminate $x_j$, where $j>r+1$.
Because $F$ is multilinear, Equation~\eqref{eq:G} implies that
$E=FG$ has no term of the form
$x_1\cdots x_n\cdot x_i/x_j$
with $i<r$ and $j>r+1$,
which means $a_jb_i=0$.
Hence either
$b_i=0$ for all $i<r$, or $a_j=0$ for all $j>r+1$.
However, the latter together with Equation~\eqref{eq:g}
yields that $g'_j=0$ for $j>r+1$, whence
$G=x_{r+2}\cdots x_n\cdot\sum_{i=1}^{r+1} g_ix_i$
has a non-trivial monomial factor, against Assumptions~\ref{ass}.
We conclude that $b_i=0$ for $i<r$,
and Equation~\eqref{eq:g} yields $g_i=0$ for $i<r$, and so
$G=\sum_{j=r}^n g'_j\,x_r\cdots x_n/x_j$.

Because $G$ has no non-trivial monomial factor we have
$g'_j\neq 0$ for $j\ge r$.
This implies that no term of $F$ can involve more than one
indeterminate from the set
$\{x_r,\ldots,x_n\}$, and hence
\[
F=x_1\cdots x_{r-1}\cdot\sum_{j=r}^nf_j\,x_j,
\]
providing us with the desired linear factor.

\subsection{The subcase $r+2$}
Now we may assume that each two distinct terms of $F$
involve together exactly $r+2$ indeterminates.
We will deduce a contradiction.
Possibly after renumbering the
indeterminates we may assume that two of the terms of $F$ are (nonzero) scalar multiples of
$x_1x_2\cdot x_5\cdots x_{r+2}$ and $x_3x_4\cdot x_5\cdots x_{r+2}$
(to be appropriately interpreted in case $r=2$).
Because each term of $G$ involves exactly $n-r$ indeterminates, and can share at most one indeterminate with
each of those two terms of $F$ according to Property~\ref{property}, it must involve
all of $x_{r+3},\ldots,x_n$, and exactly one indeterminate from each of the two sets
$\{x_1,x_2\}$ and
$\{x_3,x_4\}$.
Because $G$ has no non-trivial monomial factor we have $r=n-2$, and hence
\[
G=g_{13}x_1x_3+g_{14}x_1x_4+g_{23}x_2x_3+g_{24}x_2x_4
\]
for certain scalars $g_{ij}$.
Again because $G$ has no non-trivial monomial factor we have either $g_{13}g_{24}\neq 0$ or $g_{14}g_{23}\neq 0$.
After possibly exchanging $x_3$ and $x_4$ we may assume the former.

Now according to Property~\ref{property} each term of $F$ can share at most one indeterminate with
each term of $G$, and this leaves only
$x_1x_4\cdot x_5\cdots x_n$ and $x_2x_3\cdot x_5\cdots x_n$
as possible terms of $F$ besides those two assumed from the start.
However, each of these must be excluded because together with one of the initial terms it involves only $n-1=r+1$
indeterminates, rather than $r+2$.
Hence we have
\[
F=(
f_{12}x_1x_2+
f_{34}x_3x_4
)\cdot x_5\cdots x_n,
\]
with $f_{12}f_{34}\neq 0$.
Comparing coefficients of corresponding terms in the equality $FG=E$ we find
$f_{12}g_{13}=a_4b_1$ and
$f_{34}g_{13}=a_2b_3$,
whence $a_2b_1\neq 0$.
This means that $FG$ has a nonzero term $x_1^2x_3x_4\cdot x_5\cdots x_n$,
which is clearly impossible.

This contradiction completes our proof of Theorem~\ref{thm:form}.

\bibliography{References}

\end{document}